	\titleformat{\section}[block]{\Large\bfseries\filcenter}{\thesection}{1em}{}
\theoremstyle{plain}
\newcommand{\thistheoremnames}{}
\newtheorem*{genericthms}{\thistheoremnames}
\newenvironment{bigthm*}[1]
  {\renewcommand{\thistheoremnames}{#1}%
   \begin{genericthms}}
  {\end{genericthms}}
\renewcommand*\thesection{\arabic{section}}
\numberwithin{equation}{section}
\theoremstyle{plain}
\newtheorem{thm}{Theorem}
\newtheorem{lemma}[thm]{Lemma}
\newtheorem{prop}[thm]{Proposition}
\newtheorem{cor}[thm]{Corollary}
\numberwithin{thm}{section} 
\theoremstyle{definition}
\newtheorem{ndef}[thm]{Definition}
\newtheorem{question}[thm]{Question}
\newtheorem{remark}[thm]{Remark}
\newcommand{\thistheoremname}{}
\newtheorem{genericthm}[equation]{\thistheoremname}
\let\expandafter\oldproof\csname\string\proof\endcsname
\let\oldendproof\endproof
\renewenvironment{proof}[1][\proofname]{%
  \oldproof[\upshape \bfseries #1:]%
}{\oldendproof}
\def\@makechapterhead#1{%
  \vspace*{50\p@}%
  {\parindent \z@ \raggedright \normalfont
    \interlinepenalty\@M
    \Huge\bfseries  \thechapter.\quad #1\par\nobreak
    \vskip 40\p@
  }}
\def \a{\alpha}
\def \R {\mathbb{R}}
\def \Z {\mathbb{Z}}
\def \N{\mathbb{N}}
\def \D{\textup{D}}
\def \J{\textup{J}}
\def \e{\varepsilon}
\def \d{\,\textup{d}}
\def \exc{\backslash}
\def \p{\partial}
\def \mc{\mathcal}
\def \mb{\mathbb}
\def \w{\rightharpoonup}
\def \tp{\textup}
\DeclareFontFamily{OMX}{MnSymbolE}{}
\DeclareSymbolFont{MnLargeSymbols}{OMX}{MnSymbolE}{m}{n}
\DeclareFontShape{OMX}{MnSymbolE}{m}{n}{
    <-6>  MnSymbolE5
   <6-7>  MnSymbolE6
   <7-8>  MnSymbolE7
   <8-9>  MnSymbolE8
   <9-10> MnSymbolE9
  <10-12> MnSymbolE10
  <12->   MnSymbolE12
}{}
\DeclareFontShape{OMX}{MnSymbolE}{b}{n}{
    <-6>  MnSymbolE-Bold5
   <6-7>  MnSymbolE-Bold6
   <7-8>  MnSymbolE-Bold7
   <8-9>  MnSymbolE-Bold8
   <9-10> MnSymbolE-Bold9
  <10-12> MnSymbolE-Bold10
  <12->   MnSymbolE-Bold12
}{}
\let\llangle\@undefined
\let\rrangle\@undefined
\DeclareMathDelimiter{\llangle}{\mathopen}%
                     {MnLargeSymbols}{'164}{MnLargeSymbols}{'164}
\DeclareMathDelimiter{\rrangle}{\mathclose}%
                     {MnLargeSymbols}{'171}{MnLargeSymbols}{'171}
\begin{document}

\title{\textbf{The Dirichlet problem for the Jacobian equation\\
 in critical and supercritical Sobolev spaces}}

\author[1]{{\Large Andr\'e Guerra}}
\author[1]{{\Large Lukas Koch}}
\author[2]{{\Large Sauli Lindberg}}

\affil[1]{\small University of Oxford, Andrew Wiles Building Woodstock Rd, Oxford OX2 6GG, United Kingdom 
\protect \\
  {\tt{\{andre.guerra, lukas.koch\}@maths.ox.ac.uk}}
  \vspace{1em} \ }

\affil[2]{\small 
Aalto University, Department of Mathematics and Systems Analysis, P.O. Box 11100, FI-00076 Aalto, Finland 
\protect\\
  {\tt{sauli.lindberg@aalto.fi}} \ }

\date{}

\maketitle

\begin{abstract}
We study existence and regularity of solutions to the Dirichlet problem for the prescribed Jacobian equation, $\det \D u =f$, where $f$ is integrable and bounded away from zero. In particular, we take $f\in L^p$, where $p>1$, or in $L\log L$. We prove that for a Baire-generic $f$ in either space there are no solutions  with the expected regularity.
 \end{abstract}

\unmarkedfntext{
\hspace{-0.8cm}
\emph{Acknowledgements.} A.G. and L.K. were supported by the Engineering and Physical Sciences Research Council [EP/L015811/1]. S.L. was supported by the AtMath Collaboration at the University of Helsinki and the ERC grant 834728-QUAMAP.
}

\section{Introduction}

For $n>1$ let us take a bounded domain $\Omega\subset \R^n$ which is both smooth and uniformly convex.
We consider the prescribed Jacobian equation, that is,
\begin{equation}
\det \D u = f \quad \tp{ in } \Omega,
\label{eq:jacPDE}
\end{equation}
which is a first-order fully non-linear underdetermined PDE. The equation \eqref{eq:jacPDE} has a rich geometric flavour: formally, it can be written as
$u^* (f \d y)= \d x$,
where $u^*$ denotes the pull-back of the volume form $f(y) \d y$ under $u$. Hence \eqref{eq:jacPDE} can be read as asserting equivalence of volume forms over $\Omega$ with the same mass.

To be precise, we will consider a Dirichlet problem problem for \eqref{eq:jacPDE}:
\begin{equation}
\label{eq:jac}
\begin{cases}
\det{\D u}=f & \tp{in } \Omega,\\
u=\tp{id} & \tp{on } \p \Omega.
\end{cases}
\end{equation}
Here $f\colon \Omega\to \R$ is integrable and  satisfies the compatibility and non-degeneracy conditions
\begin{equation}
\label{eq:hypdata}
\frac{1}{|\Omega|}\int_\Omega f(x)\d x = 1 \qquad \tp{ and } \qquad \inf_\Omega f\geq c> 0.
\end{equation}

Whenever $f$ is smooth or, more generally, H\"older continuous, it is natural to look for solutions which are at least $C^1$ and thus \eqref{eq:jac} can be understood classically. Under these assumptions, there is a very complete well-posedness theory going back to the seminal works of \textsc{Moser} \cite{Moser1965} and \textsc{Dacorogna}--\textsc{Moser} \cite{Dacorogna1990}, see also the book \cite{Csato2012} and the references therein.
An alternative approach to this problem was taken in \cite{Carlier2012}, where the well-posedness theory for \eqref{eq:jac} is derived from \textsc{Caffarelli}'s regularity theory \cite{Caffarelli1992, Caffarelli1996} for the Monge--Ampère equation.

A recurring theme in the modern theory of PDE is to study well-posedness in spaces of weak solutions. 
In this paper we are interested in studying Sobolev solutions of \eqref{eq:jac} when $f\in L^p(\Omega)$ and we will focus, in particular, on the case where the solutions are in a critical or supercritical Sobolev space. In contrast with the rich classical theory, almost nothing is known in this setting. Indeed, and since the Jacobian is an $n$-homogeneous polynomial in $\D u$, we have the following natural question:

\begin{question}\label{question:main}
Let $1\leq p<\infty$ and assume $f$ satisfies (\ref{eq:hypdata}). 
\begin{enumerate}
\item\label{it:p>1} If $p>1$, $f\in L^p(\Omega)$, does (\ref{eq:jac}) have a solution $u\in W^{1,np}(\Omega,\R^n)$? See \cite[Question \ 3]{Ye1994}.
\item\label{it:p=1} If $p=1$, $f\in L\log L(\Omega)$, does (\ref{eq:jac}) have a solution $u\in W^{1,n}(\Omega,\R^n)$? See \cite[page 206]{Hogan2000}.
\end{enumerate}
\end{question}

The case $p=\infty$ is ruled out from Question \ref{question:main} and it was addressed in \cite{Burago1998, McMullen1998}, see also \cite{Viera2018}, where it was shown that in this case the answer is negative, even when $f$ is continuous. Partial positive results were obtained in \cite{Riviere1996}.
The case $p=1$, where one must replace $L^1(\Omega)$ with $L\log L(\Omega)$ \cite{Hogan2000}, also deserves special attention. Indeed, the Jacobian benefits from an improved integrability, first noticed by \textsc{M\"uller} \cite{Muller1990} and then generalised in \cite{Coifman1993}; an estimate up to the boundary was obtained in \cite{Hogan2000} and, in the planar case, a beautiful sharp version can be found in \cite{Astala2012}. As noticed in \cite{Guerra2019}, this improved integrability is essentially a by-product of the weak continuity of the Jacobian, see Section \ref{sec:improvedintegrability} for more details. 
We note that the special behaviour of the Jacobian at the endpoints $p=1$, $p=\infty$ is not unexpected:
for the divergence equation, which is a linearisation of the prescribed Jacobian equation, one also has non-existence of solutions when $f\in L^1$ or $f\in L^\infty$, due to the failure of estimates in these cases \cite{Mityagin1958, Ornstein1962}, see also \cite{Faraco2020, Kirchheim2016} and \cite[§2]{Bourgain2002}.

The answer to both parts of Question \ref{question:main} is negative in a rather strong sense. 
Indeed, let us  consider for $1\leq p<\infty$ the complete metric spaces
\begin{equation*}
\begin{split}
X_p & \equiv \left\{f\in 
L^p(\Omega): f \tp{ satisfies } \eqref{eq:hypdata}
\right\}, \qquad \tp{for } p>1\\
X_1 & \equiv \left\{f\in 
 L\log L(\Omega): f \tp{ satisfies } \eqref{eq:hypdata}
\right\}.
\end{split}
\end{equation*}
We then have:

\begin{bigthm*}{Main Theorem}
\label{thm:main}
Fix $0<c<1$ and $1\leq p<\infty$.

\noindent There is $f\in X_p$ such that the Dirichlet problem \eqref{eq:jac} has no solution $u\in W^{1,np}(\Omega,\R^n)$.
In fact, for a Baire-generic $f\in X_p$, \eqref{eq:jac} has no solution in the space $\bigcup_{n\leq q,\, p<q} W^{1,q}(\Omega,\R^n).$
\end{bigthm*}

We briefly describe the proof of the main theorem. Assume for simplicity that $\Omega=B$ is a ball.
The starting point of our proof is the observation that if  $f\in L^p(B)$ is a radially symmetric function then the unique spherically symmetric solution of \eqref{eq:jac} is, in general, in $W^{1,p}\exc W^{1,q}$ for any $q>p$, see Proposition \ref{prop:ye}. Hence we first show that any $L^p$ function admits perturbations which are radially symmetric in a small neighbourhood of $\p B$ and such that the $W^{1,q}$-norm of the symmetric solutions in that neighbourhood is unbounded. The crucial step of the proof is to show that, for these perturbations, the symmetric solutions have comparable $q$-Dirichlet energy to that of the energy-minimiser: hence we deduce that the $W^{1,q}$-norm of $q$-energy-minimal solutions (see p. 8) is unbounded at every $f\in X_p$.
The weak continuity of the Jacobian, together with an application of the  Baire Category  Theorem, then concludes the proof.

It will be clear from the proof that the boundary condition $u=\tp{id}$ on $\p \Omega$ in \eqref{eq:jac} can be relaxed to the requirement that $u(x) \in \p \Omega$ for $\mathscr H^{n-1}$-a.e.\ $x\in \p \Omega$.
Under this generalised boundary condition, our arguments also deal easily with the case where the compatibility condition in \eqref{eq:hypdata} is replaced by the condition $\frac{1}{|\Omega|}\int_\Omega f(x)\d x=k$ for some fixed $k\in \N$. In general, the number $k$ gives an essential upper bound on the multiplicity function of solutions.

Our main theorem raises the following question, which we do not address here:
\begin{question}\label{qu:open}
For $n\leq p=q<\infty$ and $f\in X_p$, does (\ref{eq:jac}) have a solution $u\in W^{1,q}(\Omega,\R^n)$?
\end{question}

Question \ref{qu:open} is natural from the viewpoint of the regularity theory for optimal transport maps. In Optimal Transportation, the underdetermined equation \eqref{eq:jacPDE} is usually turned into a determined elliptic equation by imposing the constraint $u=\D \phi$ for some convex scalar function $\phi$: thus $\phi$ satisfies the Monge--Ampère equation $\det \D^2 \phi=f$, see \cite{Brenier1991, DePhilippis2014,Trudinger2009}.  The relation between the prescribed Jacobian equation and the Monge--Ampère equation is analogous to that of their linear counterparts, the divergence equation and the Poisson equation, see Table \ref{table}. Moreover, the estimates for the Monge--Ampère equation scale as in these linear problems, which is precisely the scaling in Question \ref{qu:open}.

However, the failure of a certain estimate for the determined problem does not imply that the same happens for the underdetermined problem, as shown by \textsc{Bourgain}--\textsc{Brezis} \cite{Bourgain2002}. Surprisingly, even in the linear setting,
\textit{solutions of the determined problem do not have optimal regularity for the underdetermined problem}! This is also the case for the non-linear problem \eqref{eq:jacPDE}, at least if we only require $\inf_\Omega f\geq 0$. Indeed, at \cite[page 293]{Ye1994}, Ye gives an example in $\Omega = B \subset \R^2$ such that \eqref{eq:jac} has a smooth solution but the corresponding solution of the Monge--Amp\`{e}re equation is only $C^{1,1}$ regular.

\begin{table}[h]
\centering
\begin{tabular}{c|c|c}
& Linear & Non-linear \\\hline
Determined &  $\triangle \phi = f$ & $ \det \D^2 \phi = f$ \\\hline
Underdetermined  & $\tp{div}\, u = f$ & $\det \D u = f$
\end{tabular}
    \captionsetup{width=.8\linewidth}
\caption{Particular solutions to the underdetermined problem are obtained by taking $u=\D \phi$ for $\phi$ a solution of the corresponding determined problem.}
\label{table}
\end{table}

In \cite{Koumatos2015} it was shown that for any $f\in L^1(\Omega)$ and any $q<n$ there is a \textit{pointwise solution} $u\in W^{1,q}(\Omega,\R^n)$ of \eqref{eq:jac}. Thus the range of exponents in Question \ref{qu:open} is, in some sense, optimal.
However, it is perhaps more natural to look for distributional solutions of \eqref{eq:jac} once $q<n$. Indeed, for $\frac{n^2}{n+1}\leq q < n$ there is a well-defined distributional Jacobian which is, in general, different from the pointwise Jacobian. Moreover, we recall that for $n\leq q$, as in this paper, the distributional Jacobian and the pointwise Jacobian agree. We do not discuss distributional Jacobians further, instead referring the reader to \cite{Alberti2003, Ball1977, Brezis2011b, Muller1990a}.

To conclude the introduction we briefly discuss the motivation for the work initiated on this paper, which is twofold.
Firstly, the Jacobian plays an important role in Continuum Mechanics, specially in nonlinear elasticity, see e.g.\ \cite{Ball1981a, Dacorogna1981, Fischer2019, Hogan2000}. This is not surprising since, for a Sobolev map, positivity of the Jacobian implies a weak form of local invertibility \cite{Fonseca1995}.  Secondly, in $\R^n$, there is an intimate connection between the Jacobian equation, commutator estimates and factorization theorems in Hardy and $L^p$ spaces \cite{Hytonen2018,Hytonen2019,Lindberg2020}; an outstanding open problem in the area is to determine whether the Jacobian is onto the Hardy or $L^p$ space \cite{Coifman1993,Iwaniec1997,Lindberg2017}.
Positive partial results can be found in \cite{Coifman1993,Hytonen2019,Lindberg2015,Lindberg2020} and negative ones in \cite{Lindberg2017}. We refer the reader to our forthcoming work \cite{GuerraKochLindberg2020b} for further details and progress towards the possible non-surjectivity of the Jacobian.

\section{Notation and background results}\label{sec:prelims}

In this section we gather a few preliminary results for the convenience of the reader.

The set $\Omega$ will always denote a bounded smooth domain in $\R^n$. Moreover, in Section \ref{sec:proof}, we will further assume that $\Omega$ is \textit{uniformly convex}, in the sense that the second fundamental form of $\p \Omega$ is uniformly positive.

We will often write $\J u\equiv \det \D u$. As is customary, the symbols $a \approx b$ and $a\lesssim b$ are taken to mean that there is some constant $C>0$ independent of $a$ and $b$ such that $C^{-1} a \leq b \leq C a$ and $a\leq C b$, respectively. We denote by $\mb S^{n-1}$ the unit sphere in $\R^n$ and, for $r<R$, we write 
\begin{align*}
\mb A(r,R)\equiv \{x\in \R^n: r<|x|<R\}, \qquad
\mb A[r,R]\equiv \{x\in \R^n: r\leq |x|\leq R\}.
\end{align*} 
Here $|\cdot |$ denotes the Euclidean norm of a vector in $\R^n$; for a matrix $A\in \R^{n\times n}$, we also write $|A|$ for its operator norm.
We represent by $\mathscr L^n$ the $n$-dimensional Lebesgue measure, by $\mathscr H^{n-1}$ the $(n-1)$-dimensional Hausdorff measure and we write
$\omega_n \equiv \mathscr L^n(B_1(0))$.

\subsection{Mappings of finite distortion}
\label{sec:mappingsofFD}

In this section we gather a few results from the theory of mappings of finite distortion, which is detailed in \cite{Hencl2014a, Iwaniec2001}, see also \cite{Astala2009} for the planar case.

\begin{ndef}
Let $u\in W^{1,1}_\tp{loc}(\Omega,\R^n)$ be such that $0 \leq \J u\in L^1_\tp{loc}(\Omega)$. We say that $u$ is a \emph{map of finite distortion} if there is a function $K\colon \Omega\to [1,\infty]$  such that  $K<\infty$ a.e.\ in $\Omega$ and
$$|\D u(x)|^n \leq K(x)\, \J u(x) \quad \tp{ for a.e.\ } x \tp{ in } \Omega.$$
If $u$ has finite distortion, we can set $\tp{K}u(x) = \frac{|\D u|^n}{\J u(x)}$ if $\J u(x)\neq 0$ and $\tp{K}u(x)=1$ otherwise; this function is the (optimal) \emph{distortion} of $u$.
\end{ndef}

We state some basic properties of the topological degree, referring the reader to \cite{Fonseca1995} for further details.

\begin{lemma}\label{lemma:degree}
For $u\in C^0(\overline \Omega, \R^n)$ and $x\in\R^n\exc u(\p\Omega)$, the \emph{topological degree} of $u$ at $x$ with respect to $\Omega$, denoted by $\tp{deg}(x,u,\Omega)$, has the following properties:
\begin{enumerate}
\item\label{it:integer} $\tp{deg}(\cdot,u,\Omega)\colon \R^n\exc u(\p\Omega)\to \Z$;
\item\label{it:boundary} 
if $u=v$ on $\p \Omega$, then $\tp{deg}(\cdot, u,\Omega)=\tp{deg}(\cdot, v,\Omega)$ on $\R^n\exc u(\p\Omega)$;
\item\label{it:inclusion}
if $x\not \in u(\p \Omega)$ and $\tp{deg}(x,u,\Omega)\neq 0$ then $x\in u(\Omega)$.
\end{enumerate}
\end{lemma}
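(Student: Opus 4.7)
The plan is to derive all three properties from the analytic integral representation of the Brouwer degree. For $u\in C^\infty(\overline\Omega,\R^n)$ and $x\notin u(\p\Omega)$, I would set
\[
\tp{deg}(x,u,\Omega)=\int_\Omega \varphi_\e(u(y)-x)\,\J u(y)\d y,
\]
where $\varphi_\e$ is a smooth mollifier supported in $B_\e(0)$ with unit integral and $\e<\tp{dist}(x,u(\p\Omega))$. The key technical step is to show this is independent of the mollifier: writing $\varphi_\e(z)\d z = d\omega$ for a compactly supported $(n-1)$-form $\omega$ and using that $\J u\,\d y = u^*(\d z)$, a Stokes-theorem / divergence-theorem computation produces a boundary integral supported where $\varphi_\e=0$, so the difference of two such representations vanishes. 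The definition is then extended to continuous $u$ by $C^0$-approximation, using that the integral is locally constant in $u$ whenever $x$ stays away from the image of $\p\Omega$.

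For (i), I would invoke Sard's theorem to choose regular values $x'$ densely in $\R^n\exc u(\p\Omega)$: the implicit function theorem makes $u^{-1}(x')$ a finite set, and a change of variables in the integral yields $\tp{deg}(x',u,\Omega)=\sum_{y\in u^{-1}(x')}\tp{sign}\,\J u(y)\in\Z$. Since the integral formula is locally constant in $x$ on each component of $\R^n\exc u(\p\Omega)$, integrality on a dense set forces integer values everywhere. For (ii), consider the straight-line homotopy $H_t=(1-t)u+tv$; since $u=v$ on $\p\Omega$, we have $H_t|_{\p\Omega}=u|_{\p\Omega}$ for every $t\in[0,1]$, so $x\notin H_t(\p\Omega)$ throughout, and the same Stokes argument shows $\frac{d}{dt}\tp{deg}(x,H_t,\Omega)=0$, giving $\tp{deg}(x,u,\Omega)=\tp{deg}(x,v,\Omega)$. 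For (iii), I would argue by contraposition: if $x\notin u(\overline\Omega)$ then $\delta:=\tp{dist}(x,u(\overline\Omega))>0$, and picking $\e<\delta$ in the integral makes $\varphi_\e(u(y)-x)\equiv 0$, whence $\tp{deg}(x,u,\Omega)=0$.

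The main obstacle is the foundational one of establishing that the analytic formula is well-defined and homotopy-invariant — essentially the full content of Brouwer degree theory — which is why the authors simply cite Fonseca--Gangbo rather than reproving it. Once that machinery is in place, the three stated properties drop out with essentially no further work.
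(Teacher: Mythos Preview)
The paper does not prove this lemma at all: it is stated without proof, with the sentence ``We state some basic properties of the topological degree, referring the reader to \cite{Fonseca1995} for further details'' preceding it. Your outline of the analytic construction of the Brouwer degree is correct and standard, and you have already correctly identified that the authors defer to Fonseca--Gangbo rather than reproduce this foundational material.
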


For our purposes, the following classical result \cite{Vodopyanov1976} is particularly relevant:

\begin{thm}\label{thm:mappingsofFD}
Let $u\in W^{1,n}(\Omega,\R^n)$ be a map of finite distortion. Then $u$ has a continuous representative with the Lusin (N) property.
\end{thm}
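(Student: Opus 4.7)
The plan is to establish the two properties in sequence, both being consequences of the \emph{monotonicity} forced upon $u$ by finite distortion together with the borderline integrability $|\D u|^n \in L^1$. Since $K<\infty$ almost everywhere, the distortion inequality immediately gives $\J u \geq 0$ a.e. The first task is to convert this pointwise sign into a topological statement: for every ball $B\Subset\Omega$, the topological degree $\deg(\cdot,u,B)$ is non-negative. I would obtain this by approximating $u$ in $W^{1,n}$ by smooth maps $u_k$, applying the classical degree formula $\int_{\R^n} \deg(y,u_k,B)\,dy=\int_B \J u_k\,dx$, and passing to the limit using weak continuity of the Jacobian together with the continuity of the degree under uniform convergence on $\p B$ (legitimate for a.e.\ radius via the ACL property and a Fubini argument). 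Once $\deg\geq 0$ is known, Lemma~\ref{lemma:degree}(iii) forces $u$ to be monotone in Lebesgue's sense: for each ball $B$ and each component, $\mathrm{osc}_{B} u^i \leq \mathrm{osc}_{\p B} u^i$.

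With monotonicity in hand, continuity of a representative would follow from the classical Morrey-type oscillation estimate for monotone $W^{1,n}$ maps,
\[
\underset{B(x_0,r)}{\mathrm{osc}}\, u \leq C\left(\int_{B(x_0,2r)} |\D u|^n\,dx\right)^{1/n},
\]
which one proves by integrating the monotonicity bound against $\rho^{n-1}\,d\rho$ over spheres $\p B(x_0,\rho)$ with $\rho\in(r,2r)$ and invoking the Poincaré inequality on each sphere, the restriction to spheres being legitimate by the ACL property. Absolute continuity of the Lebesgue integral then gives $\mathrm{osc}\,u \to 0$ as $r\to 0$, yielding the desired continuous representative.

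For the Lusin (N) property I would prove the sharper area inequality
\[
|u(E)| \leq \int_E \J u\,dx \qquad \text{for every measurable } E\subset \Omega,
\]
from which (N) is immediate: $|E|=0$ forces $\int_E \J u = 0$ and hence $|u(E)|=0$. By outer regularity of Lebesgue measure together with a Vitali-type covering argument, it suffices to verify this inequality on balls $B\Subset \Omega$, and on each such ball
\[
|u(B)| \leq \int_{\R^n} \deg(y,u,B)\,dy = \int_B \J u\,dx,
\]
where the first inequality uses Lemma~\ref{lemma:degree}(iii) with $\deg\geq 0$ from Step~1, and the identity is the degree formula applied to the continuous representative produced in Step~2.

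The main obstacle will be the rigorous execution of Step~1 at the $W^{1,n}$ level: at that stage $u$ is not yet known to be continuous and the classical degree machinery is not directly applicable. This is where Müller's weak continuity theorem for the Jacobian enters decisively, exploiting precisely the borderline integrability $|\D u|^n\in L^1$ coming from finite distortion. Once non-negativity of the degree (and hence monotonicity) is rigorously established, the remaining steps are standard tools from the theory of monotone Sobolev maps.
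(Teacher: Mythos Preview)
The paper does not prove Theorem~\ref{thm:mappingsofFD}; it is quoted without proof as a classical result from the literature, so there is no ``paper's own proof'' to compare against. Your outline has the right overall architecture (weak monotonicity $\Rightarrow$ Gehring--Morrey oscillation estimate $\Rightarrow$ continuity, then a degree/area identity $\Rightarrow$ condition~(N)), but two of the logical links are not justified as written.

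The first gap is the passage from $\deg(\cdot,u,B)\geq 0$ to componentwise Lebesgue monotonicity ``via Lemma~\ref{lemma:degree}\ref{it:inclusion}''. That item says that nonzero degree at $y$ forces $y\in u(B)$; this is the \emph{wrong direction} and cannot bound interior oscillation by boundary oscillation, since degree zero does not exclude a point from the image. The standard argument is different and uses finite distortion in an essential way: for a level $t$ with $(u^i-t)^+\in W^{1,n}_0(B)$, set $w=(u^1,\dots,\min(u^i,t),\dots,u^n)$; since $w=u$ on $\partial B$ the null-Lagrangian identity gives $\int_B\J w=\int_B\J u$, while $\J w=0$ on $\{u^i>t\}$ and $\J u\geq 0$ force $\J u=0$ a.e.\ there; finite distortion then yields $\D u=0$ on that set, whence $(u^i-t)^+\equiv 0$. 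The second gap is in the area inequality: to get $|u(B)|\leq \int_{\R^n}\deg(y,u,B)\,\d y$ you need $\deg(y,u,B)\geq 1$ for a.e.\ $y\in u(B)\setminus u(\partial B)$, not merely $\geq 0$, and Lemma~\ref{lemma:degree}\ref{it:inclusion} again points the wrong way. This positivity does hold for continuous sense-preserving maps, but it requires its own local-degree argument---and, as you yourself note, none of the degree machinery is available until continuity is already in hand, so Steps~1--3 must be reordered accordingly.
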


Recall that a map has the \textit{Lusin (N) property} if it maps $\mathscr L^n$-null sets to $\mathscr L^n$-null sets.
Theorem \ref{thm:mappingsofFD} gives no improvement in the supercritical case, since the continuous representative of a map $u\in W^{1,p}$, $p>n$, has the Lusin (N) property. 

Theorem \ref{thm:mappingsofFD} shows that solutions of \eqref{eq:jac} satisfy the change of variables formula:

\begin{cor}\label{cor:changeofvars}
Let $u\in W^{1,n}(\Omega,\R^n)$ be a solution of \eqref{eq:jac} and \eqref{eq:hypdata}. Then
$$\mathscr L^n(u(E)) = \int_E f \d x\qquad \tp{for all measurable sets } E\subseteq \Omega.$$
\end{cor}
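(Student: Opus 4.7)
The plan is to combine Theorem \ref{thm:mappingsofFD} with a degree computation and the standard area formula for Sobolev maps with the Lusin (N) property; the main work lies in promoting the area-formula identity to honest a.e.\ injectivity of $u$.

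First I would note that $u$ is a map of finite distortion: since $\J u = f\geq c>0$ a.e., the inequality $|\D u(x)|^n \leq K(x)\J u(x)$ holds a.e.\ with $K(x) = |\D u(x)|^n/f(x)$. Theorem \ref{thm:mappingsofFD} thus produces a continuous representative, still denoted by $u$, enjoying the Lusin (N) property, so the standard area formula is available: for every measurable $E\subseteq \Omega$,
\[
\int_E f\d x = \int_E \J u \d x = \int_{\R^n} N(u,E,y)\d y, \qquad N(u,E,y)\equiv \#\{x\in E : u(x)=y\}.
\]

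Next I would compute the multiplicity function $N(u,\Omega,\cdot)$ using degree theory. Since $u=\tp{id}$ on $\p \Omega$, Lemma \ref{lemma:degree}(\ref{it:boundary}) gives $\tp{deg}(y,u,\Omega)=\chi_\Omega(y)$ for every $y\in \R^n\exc \p \Omega$, so Lemma \ref{lemma:degree}(\ref{it:inclusion}) forces $N(u,\Omega,y)\geq 1$ on $\Omega$. Applying the area formula with $E=\Omega$ and invoking \eqref{eq:hypdata} yields
\[
|\Omega| = \int_\Omega f\d x = \int_{\R^n} N(u,\Omega,y)\d y \geq \int_\Omega 1 \d y = |\Omega|,
\]
and since $N$ is integer-valued, this forces $N(u,\Omega,\cdot)=\chi_\Omega$ a.e.\ on $\R^n$, whence $N(u,\Omega,\cdot)\in L^1(\R^n)$.

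Finally, I would upgrade the a.e.\ identity $N(u,\Omega,\cdot)=\chi_\Omega$ to genuine injectivity of $u$ off a null set. Letting $B\equiv \{y\in \R^n: N(u,\Omega,y)\neq \chi_\Omega(y)\}$ (which one may take Borel; in any case $|B|=0$), the area formula with $E=u^{-1}(B)\cap \Omega$ gives
\[
c\,|u^{-1}(B)\cap \Omega| \leq \int_{u^{-1}(B)\cap \Omega} f\d x = \int_B N(u,u^{-1}(B)\cap \Omega,y)\d y \leq \int_B N(u,\Omega,y)\d y = 0,
\]
so $A\equiv \Omega\exc u^{-1}(B)$ has full measure in $\Omega$ and $u|_A$ is injective. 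For arbitrary measurable $E\subseteq \Omega$, the area formula together with $|u(E\exc A)|=0$ (a direct consequence of the Lusin (N) property, since $|E\exc A|=0$) then gives $\int_E f\d x = \int_{\R^n} N(u,E\cap A,y)\d y = |u(E\cap A)|=|u(E)|$, completing the argument.
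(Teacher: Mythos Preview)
Your proof is correct and follows essentially the same route as the paper: both observe that $u$ has finite distortion, invoke Theorem~\ref{thm:mappingsofFD} to obtain the area formula, and then combine the compatibility condition $\int_\Omega f=|\Omega|$ with the boundary data to deduce that the multiplicity function equals $1$ a.e. The paper is terser---it leaves the degree argument implicit and concludes directly from $\mc N(\cdot,u,\Omega)\le 1$ a.e.\ without your extra upgrade to genuine injectivity on a full-measure set, which is correct but unnecessary (since $\mc N(\cdot,u,E)\le \mc N(\cdot,u,\Omega)$ is already integrable and equals $\chi_{u(E)}$ a.e.).
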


\begin{proof}
The non-degeneracy assumption \eqref{eq:hypdata} implies that $u$ is a map of finite, and even integrable, distortion. Hence, by Theorem \ref{thm:mappingsofFD}, the coarea formula applies, see for instance \cite[Theorem 5.23]{Fonseca1995}: if 
$\mc N$ is
the multiplicity function $\mc N(y,u,E)\equiv \#\{x\in E: u(x)=y\}$, then
\begin{equation}\int_{u(E)} \mc N(y,u,E) \d y= \int_E f\d x, \qquad
\tp{for all measurable sets } E\subseteq \Omega
\label{eq:area}.
\end{equation}
 Moreover, $u$ is injective a.e., that is $\mc N(y,u,\Omega)\leq 1$ for a.e.\ $y\in \R^n$: it suffices to apply \eqref{eq:area} with $E=\Omega$, recalling that $\int_\Omega f\d x=|\Omega|$ by \eqref{eq:hypdata}. Hence the conclusion follows from \eqref{eq:area}.
\end{proof}

Although this will not be needed for the proof of the Main Theorem, it is worthwhile mentioning that solutions of \eqref{eq:jac} in a sufficiently good Sobolev space are homeomorphisms. The sharp statement follows for instance from the main result in \cite{Hencl2002}:

\begin{thm}\label{thm:HenclMaly}
Let $u\in W^{1,n}(\Omega,\R^n)$ be a continuous map with distortion in $L^{n-1}(\Omega)$. If there is a compact set $K\subset \Omega$ such that $u|_{\Omega\exc K}$ is discrete then $u$ is open and discrete.
\end{thm}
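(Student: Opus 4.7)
My plan is to deduce the statement from the main theorem of \cite{Hencl2002}, which I recall asserts that any non-constant continuous map $v\in W^{1,n}_\tp{loc}(\Omega,\R^n)$ of finite distortion with $\tp{K}v\in L^{n-1}_\tp{loc}(\Omega)$ is both open and discrete. With this in hand, the only work is to verify that $u$ is non-constant. Since $K\subsetneq \Omega$ is compact, $\Omega\exc K$ is non-empty and open; if $u$ were constant then its unique level set would contain the whole open set $\Omega\exc K$ and hence fail to be discrete there, contradicting the hypothesis. Thus $u$ is non-constant and \cite{Hencl2002} gives the conclusion directly.

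For completeness I would sketch the key step in \cite{Hencl2002}. The starting point is Theorem \ref{thm:mappingsofFD}: by finite distortion and $W^{1,n}$ regularity, $u$ has a continuous representative with the Lusin (N) property, so the change-of-variables and coarea formulas are available. The sharp integrability $\tp{K}u\in L^{n-1}$ is then used to prove that the branch set $\{\J u=0\}$ has vanishing $n$-capacity, which ensures that $u$ cannot collapse a set of positive capacity to a point once $u$ is non-constant. Combining this with a classical degree argument on small balls $B_r(x_0)\subset \Omega$ for which $u(x_0)\notin u(\p B_r(x_0))$, one shows that $\tp{deg}(u(x_0),u,B_r(x_0))$ is a non-zero integer; local constancy of the degree (Lemma \ref{lemma:degree}) then forces $u(B_r(x_0))$ to contain an open neighbourhood of $u(x_0)$, and a finiteness argument for the degree yields discreteness.

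The main obstacle in \cite{Hencl2002} is precisely this degree computation, for which the $L^{n-1}$ integrability of the distortion is sharp: weakening it would allow branch sets of positive $n$-capacity and hence maps that violate openness. In our setting, the heavy machinery of \cite{Hencl2002} is used as a black box, and the discreteness hypothesis on $\Omega\exc K$ serves solely to rule out the otherwise admissible constant case.
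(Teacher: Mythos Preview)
Your proposal is correct and matches the paper's treatment: the paper does not prove Theorem~\ref{thm:HenclMaly} at all but simply records it as a direct consequence of the main result in \cite{Hencl2002}, exactly as you do. Your observation that the compact-set hypothesis serves only to exclude the constant case is accurate, and the non-constancy check is sound. The informal sketch you add of the Hencl--Mal\'y argument is not part of the paper and is not needed here; if you keep it, be aware that some details (e.g.\ the precise role of the zero set of the Jacobian versus the branch set, and the exact capacity statement) are somewhat impressionistic and would need to be tightened if presented as more than a heuristic.
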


Recall that a map is \emph{discrete} if the preimage of every point is locally finite. A counterexample due to \textsc{Ball} \cite{Ball1981a} shows that the conclusion of Theorem \ref{thm:HenclMaly} does not hold if the distortion is in $L^q(\Omega)$ for $q<n-1$. We then deduce:

\begin{cor}\label{cor:solsarehomeos}
Let $u\in W^{1,n(n-1)}(\Omega, \R^n)$ be a solution of \eqref{eq:jac},  \eqref{eq:hypdata}. Then
 $u\in \tp{Hom}(\overline \Omega, \overline \Omega)$.
\end{cor}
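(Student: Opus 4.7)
The strategy is to apply Theorem \ref{thm:HenclMaly} to $u$, after an extension past $\p\Omega$, and then upgrade openness and discreteness to injectivity via a topological-degree argument.

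First, since $\J u=f\geq c>0$, the distortion satisfies $\tp{K}u\leq c^{-1}|\D u|^n$, and the hypothesis $u\in W^{1,n(n-1)}(\Omega,\R^n)$ gives $|\D u|^n\in L^{n-1}(\Omega)$. Thus $\tp{K}u\in L^{n-1}(\Omega)$, and Theorem \ref{thm:mappingsofFD} provides a continuous representative of $u$ with the Lusin (N) property. Pick an open ball $\Omega'\Supset\overline{\Omega}$. The boundary condition $u|_{\p\Omega}=\tp{id}$ lets me extend $u$ by the identity: define $\tilde u=u$ on $\overline{\Omega}$ and $\tilde u(x)=x$ on $\Omega'\exc\Omega$. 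Since the traces agree on $\p\Omega$, $\tilde u$ is continuous on $\Omega'$, belongs to $W^{1,n(n-1)}(\Omega',\R^n)$ (no boundary singular part appears in $\D\tilde u$), and still has distortion in $L^{n-1}(\Omega')$, since the identity has constant distortion $n^{n/2}$. Taking $K=\overline{\Omega}\subset\Omega'$ compact, the restriction $\tilde u|_{\Omega'\exc K}=\tp{id}$ is injective and hence discrete, so Theorem \ref{thm:HenclMaly} applies and $\tilde u$ is open and discrete on $\Omega'$. Restricting back, $u$ is open and discrete on $\Omega$.

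To conclude the homeomorphism property I would then use the topological degree. Since $u=\tp{id}$ on $\p\Omega$, Lemma \ref{lemma:degree}(ii) gives $\tp{deg}(y,u,\Omega)=\tp{deg}(y,\tp{id},\Omega)=\mathbf{1}_{\Omega}(y)$ for every $y\notin\p\Omega$. Because $\J u\geq c>0$ a.e., the map $u$ is sense-preserving, so at every $x\in u^{-1}(y)$ the local index $i(x,u)$ is a positive integer, and the decomposition $\tp{deg}(y,u,\Omega)=\sum_{x\in u^{-1}(y)}i(x,u)$, valid for continuous open discrete sense-preserving maps, forces $\mc N(y,u,\Omega)=1$ for $y\in\Omega$ and $\mc N(y,u,\Omega)=0$ for $y\notin\overline\Omega$. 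Thus $u|_\Omega$ is a continuous, open, bijective map of $\Omega$ onto $\Omega$, hence a homeomorphism; combined with $u|_{\p\Omega}=\tp{id}$, this produces a continuous bijection $\overline\Omega\to\overline\Omega$, which is automatically a homeomorphism since $\overline\Omega$ is compact Hausdorff.

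The step I expect to require the most care is checking that the extension $\tilde u$ actually meets the hypotheses of Theorem \ref{thm:HenclMaly} globally on $\Omega'$, specifically that the matching of traces along $\p\Omega$ is strong enough to propagate both the Sobolev regularity and the $L^{n-1}$ distortion bound across the boundary. Once openness and discreteness of $u$ are in hand, the identification $\tp{deg}=\mc N$ for sense-preserving open discrete maps is classical and closes the argument.
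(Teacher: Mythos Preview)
Your proof is correct and follows the paper's strategy closely up through the extension and the application of Theorem \ref{thm:HenclMaly}. The difference lies in how you pass from ``open and discrete'' to ``homeomorphism''. The paper quotes a black-box result (Theorem 3.27 of \cite{Hencl2014a}) to conclude directly that $u\in\tp{Hom}(\overline\Omega,u(\overline\Omega))$, and then identifies the image $u(\overline\Omega)$ with $\overline\Omega$ by a short connectedness/path argument together with Lemma \ref{lemma:degree}. You instead invoke the local-index decomposition $\tp{deg}(y,u,\Omega)=\sum_{x\in u^{-1}(y)} i(x,u)$ for sense-preserving open discrete maps, combined with $\tp{deg}(\cdot,u,\Omega)=\mathbf{1}_\Omega$, to read off $\mc N(y,u,\Omega)=1$ on $\Omega$ and $=0$ outside $\overline\Omega$; openness then forces $u(\Omega)\subseteq\Omega$, and bijectivity follows. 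Your route is more self-contained in that it avoids the external citation, but it trades that for two ingredients you should make explicit: that $W^{1,n}$ mappings of finite distortion with $\J u>0$ a.e.\ are sense-preserving (so each $i(x,u)\geq 1$), and that the index-sum formula is available once openness and discreteness hold. Both are classical (Reshetnyak theory), so this is a matter of referencing rather than a gap.
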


\begin{proof}
It follows from our assumptions that the distortion of $u$ is in $L^{n-1}(\Omega)$.  Consider a sufficiently small neighbourhood $\Omega_\delta$ of $\Omega$ and extend $u$ to be the identity in $\Omega_\delta\exc \Omega$. Theorem \ref{thm:HenclMaly}, applied in $\Omega_\delta$, shows that $u|_\Omega$ is open and discrete and it follows that $u\in \tp{Hom}(\overline \Omega,u(\overline \Omega))$, see e.g.\ \cite[Theorem 3.27]{Hencl2014a}. Moreover, $u(\overline \Omega)=\overline \Omega$. Indeed, it  suffices to prove that $u(\Omega)\subseteq \Omega$, since 
\begin{equation}
u(\overline \Omega)=u(\Omega) \sqcup  u(\p \Omega)= u(\Omega)\sqcup \p \Omega
\label{eq:homeospreservebdry}
\end{equation} 
where the unions are disjoint, and $\Omega\subseteq u(\Omega)$ by Lemma \ref{lemma:degree}.
Suppose that there is $x\in \Omega$ such that $u(x)\in \R^n \exc \overline \Omega$. Take $y\in \Omega$ such that $u(y)\in \Omega$ and consider a continuous path in $\Omega$ joining $x$ and $y$. The image of such a path under $u$ must cross $\p \Omega$ somewhere, which contradicts \eqref{eq:homeospreservebdry}.
\end{proof}

In fact, a simple argument using the change of variables formula shows that the inverse map $u^{-1}$ is in $W^{1,n}(\Omega,\Omega)$. Note that the situation in the planar case is particularly pleasant, since then $n-1=1$.
Other results in the direction of Corollary \ref{cor:solsarehomeos} can be found in \cite{Ball1981a, Sverak1988}.

\subsection{Improved integrability of the Jacobian}
\label{sec:improvedintegrability}

In this section we recall standard facts concerning the improved integrability of the Jacobian. This phenomenon was first noticed by \textsc{M\"uller} in \cite{Muller1990} and then generalised by \textsc{Coifman}--\textsc{Lions}--\textsc{Meyer}--\textsc{Semmes} in \cite{Coifman1993}. The latter paper shows that the Jacobian determinant of maps in ${\dot W}^{1,n}(\R^n,\R^n)$ lies in the Hardy space $\mathscr H^1(\R^n)$, a fact which was extended in \cite{Guerra2019} to a large class of polynomials and differential operators.
These results are essentially local and, as we are interested in the behaviour of Jacobians on domains, we will need the following global version from \cite{Hogan2000}:

\begin{thm}\label{thm:LlogL}
Let $u\in W^{1,n}(\Omega,\R^n)$ be such that $\J u\geq 0$ a.e.\ in $\Omega$. Then, for any $p>n$,
$$\Vert \J u \Vert_{L\log L(\Omega)}\leq C(\Vert u \Vert_{W^{1-1/p,p}(\p \Omega)}+\Vert \D u \Vert_{L^n(\Omega)}).$$
\end{thm}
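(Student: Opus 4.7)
The plan is to extend $u$ to a compactly supported map $\tilde u \in W^{1,n}(\R^n, \R^n)$ by gluing it to a controlled extension of its boundary trace, apply the Coifman--Lions--Meyer--Semmes Hardy space estimate for the Jacobian on all of $\R^n$, and then extract the $L\log L$ bound using the sign hypothesis $\J u \geq 0$.

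For the first step, I would fix a smooth open set $\Omega' \supset\supset \Omega$ and use the standard linear extension operator for the trace space $W^{1-1/p,p}(\p\Omega)$ to produce a map $v \in W^{1,p}(\Omega'\setminus\Omega, \R^n)$ with $v|_{\p\Omega} = u|_{\p\Omega}$ in the trace sense, $v$ vanishing near $\p\Omega'$, and
\[
\|v\|_{W^{1,p}(\Omega'\setminus\Omega)} \leq C \|u\|_{W^{1-1/p,p}(\p\Omega)}.
\]
Setting $\tilde u = u$ on $\Omega$, $\tilde u = v$ on $\Omega'\setminus\Omega$, and $\tilde u=0$ outside $\Omega'$ yields $\tilde u \in W^{1,n}(\R^n,\R^n)$ with compact support, and (since $p>n$ and $\Omega'\setminus\Omega$ is bounded) a control
\[
\|\D \tilde u\|_{L^n(\R^n)} \leq \|\D u\|_{L^n(\Omega)} + C\bigl(\|u\|_{W^{1-1/p,p}(\p\Omega)}\bigr).
\]

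Next, the CLMS theorem on $\R^n$ gives $\J\tilde u \in \mathscr H^1(\R^n)$ with $\|\J\tilde u\|_{\mathscr H^1(\R^n)} \leq C\|\D\tilde u\|_{L^n(\R^n)}^n$. On $\Omega'\setminus\Omega$ the pointwise bound $|\J\tilde u|\leq |\D v|^n$ together with $\D v\in L^p$ and $p>n$ puts $\J\tilde u$ in $L^{p/n}$ there; since $p/n>1$ and the support is bounded, $\J\tilde u\,\chi_{\R^n\setminus\Omega}$ sits in $L\log L(\R^n)$ and thus in $\mathscr H^1(\R^n)$, by the standard inclusion for compactly supported $L\log L$ functions. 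Subtracting,
\[
\J u\,\chi_\Omega = \J\tilde u - \J\tilde u\,\chi_{\R^n\setminus\Omega}
\]
is a nonnegative, compactly supported element of $\mathscr H^1(\R^n)$, with $\mathscr H^1$-norm controlled by the right-hand side of the desired inequality (raised to the $n$-th power).

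The final step invokes the classical theorem of Stein that a nonnegative, compactly supported function belongs to $L\log L$ if and only if its Hardy--Littlewood maximal function is integrable; equivalently, nonnegative compactly supported elements of $\mathscr H^1(\R^n)$ are automatically in $L\log L$, with comparable norms. Applied to $\J u\,\chi_\Omega$, this yields the required estimate on $\Omega$. The main obstacle is the extension step: to make the argument close, the trace needs to be extended into $W^{1,p}$ with $p>n$ so that the Jacobian of the extension falls into an $L^{p/n}$ space strictly better than $L^1$, which is exactly what allows the non-intrinsic part of $\J\tilde u$ to be absorbed into $L\log L$. The sign hypothesis on $\J u$ is then what upgrades $\mathscr H^1$-membership into the sharper $L\log L$-conclusion, a passage that fails for signed functions.
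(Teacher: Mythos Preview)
The paper does not prove this theorem; it is quoted from \cite{Hogan2000}, so there is no in-paper proof to compare against. Your overall strategy---extend $u$ across $\p\Omega$ via a $W^{1,p}$ extension of the trace, apply the Coifman--Lions--Meyer--Semmes estimate to put $\J\tilde u$ into $\mathscr H^1(\R^n)$, then use the sign hypothesis together with Stein's $L\log L$ characterisation---is indeed the approach of that reference.

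There is, however, a genuine error in how you pass between $\mathscr H^1$ and $L\log L$. The assertion that a compactly supported $L\log L$ function lies in $\mathscr H^1(\R^n)$ is false: every element of $\mathscr H^1(\R^n)$ has vanishing integral, so for instance $\chi_B\notin\mathscr H^1(\R^n)$. This makes your next sentence self-defeating: you conclude that $\J u\,\chi_\Omega$ is a \emph{nonnegative} element of $\mathscr H^1(\R^n)$, which would force $\J u=0$ a.e. Moreover, compactly supported signed $\mathscr H^1$ functions need not lie in $L\log L$, so even a mean-zero repair of the previous step would not let you read off the $L\log L$ bound by a set inclusion.

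The fix is to argue at the level of maximal functions rather than through inclusions of spaces. Write $(\J\tilde u)^{+}=\J\tilde u+(\J\tilde u)^{-}$ and observe that $(\J\tilde u)^{-}$ is supported in $\Omega'\setminus\Omega$ with $(\J\tilde u)^{-}\le|\D v|^n\in L^{p/n}$, $p/n>1$. For a \emph{nonnegative} function the smooth maximal function $M_\phi$ defining $\mathscr H^1$ and the Hardy--Littlewood maximal function $M$ are pointwise comparable, so by sublinearity
\[
M\bigl((\J\tilde u)^{+}\bigr)\ \lesssim\ M_\phi(\J\tilde u)+M\bigl((\J\tilde u)^{-}\bigr).
\]
The first term on the right is in $L^1(\R^n)$ because $\J\tilde u\in\mathscr H^1(\R^n)$, and the second is in $L^{p/n}(\R^n)\subset L^1_{\tp{loc}}$ by the Hardy--Littlewood maximal theorem. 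Stein's theorem then yields $(\J\tilde u)^{+}\in L\log L$ locally, and since $(\J\tilde u)^{+}=\J u$ on $\Omega$ the desired estimate follows (with the right-hand side raised to the $n$-th power, as you correctly noted).
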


There is a more general version of Theorem \ref{thm:LlogL} when $\J u$ changes sign, but we will not need it here. The space $L\log L(\Omega)$ in the theorem is a particular case of an Orlicz--Zygmund space and it can be defined as the space of those measurable functions $f\colon \Omega\to \R$ such that
\begin{equation}
\Vert f \Vert_{L\log L(\Omega)}\equiv 
\int_\Omega | f(x) | \log \bigg(e+\frac{|f(x)|}{\Vert f \Vert_{L^1}}\bigg) \d x<\infty.
\label{eq:LlogL}
\end{equation}
The above expression defines a complete norm, although it is not immediate that it satisfies the triangle inequality, see \cite[§8]{Iwaniec1999b}. The standard Luxemburg norm in $L\log L(\Omega)$ is different but equivalent to the one in \eqref{eq:LlogL}; we chose the latter since it is easier to use in calculations.
We refer the reader to \cite{Stein1969} for the relation between the space $L\log L$ and local Hardy spaces.

\section{Radial data and radial stretchings}

In this section we give a description of the regularity of radial stretchings solving \eqref{eq:jac}. 

A function $f\colon B_R(0)\to \R$ is \textit{radially symmetric} if $|x|=|y|\implies f(x)=f(y)$ and we identify any such function with a function $f\colon [0,+\infty)\to \R$ in the obvious way.
Under natural assumptions, and when the data is radially symmetric, equation (\ref{eq:jac}) admits a unique spherically symmetric solution, that is, 
a solution of the form
$u(x)=\rho(|x|) \frac{x}{|x|}$; we refer to these maps as \textit{radial stretchings}. More precisely, c.f.\ \cite[Lemma 4.1]{Ball1982}, we have:

\begin{lemma}\label{lemma:ball}
Let $n>1$ and $1\leq p<\infty$. The map $u(x)=\rho(|x|) \frac{x}{|x|}$ is in $W^{1,p}(B_R(0), \R^n)$  if and only if $\rho$ is absolutely continuous on $(0,1)$ and 
$$\Vert \D u \Vert_{L^p(B_R(0))}^p\approx_n \int_0^R \left(|\dot \rho(r)|^p + \left|\frac{\rho(r)}{r}\right|^p \right)r^{n-1} \d r<\infty;$$
a similar statement holds for $p=\infty$.
In this case, for a.e.\ $x$ in $B_R(0)$, and writing $r=|x|$, we have the formulae
\begin{gather}
\label{eq:jacobianradialstretching}
\J u(x)= \frac{1}{{r^{n-1}}}\dot \rho(r)\rho^{n-1}(r),\\
\label{eq:derivative}
\D u(x) = \frac{\rho(r)}{r} \tp{Id} + \left(\dot \rho(r)-\frac{\rho(r)}{r}\right)
\frac{x\otimes x}{r^2}.
\end{gather}
In particular, if $u$ solves (\ref{eq:jac}), then 
\begin{equation}
\label{eq:rho}
\rho^n(r)=\int_0^r n f(s) s^{n-1} \d s.
\end{equation}
\end{lemma}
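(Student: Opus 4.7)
The plan is to combine a direct pointwise computation with the ACL characterisation of Sobolev maps, and then to integrate the resulting ODE.

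At any $x\in B_R(0)\exc\{0\}$ where $\rho$ is classically differentiable at $r=|x|$, the product and chain rules applied to $u_i(x)=\rho(r)x_i/r$ give \eqref{eq:derivative} by direct calculation. The resulting matrix $\D u$ is symmetric with eigenvalue $\dot\rho(r)$ on $\R x$ and eigenvalue $\rho(r)/r$ of multiplicity $n-1$ on $x^\perp$; hence $|\D u(x)|=\max(|\dot\rho(r)|,|\rho(r)/r|)$, which is comparable to $(|\dot\rho(r)|^p+|\rho(r)/r|^p)^{1/p}$, and $\J u$ equals the product of these eigenvalues, producing \eqref{eq:jacobianradialstretching}. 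Integration in polar coordinates then yields the claimed $L^p$ norm equivalence (the case $p=\infty$ being analogous and simpler).

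The equivalence with $u\in W^{1,p}$ proceeds via the ACL characterisation: if $u\in W^{1,p}$, restricting to rays forces the one-dimensional maps $t\mapsto \rho(t)\omega = u(t\omega)$ to be absolutely continuous on compact subintervals of $(0,R)$ for $\mathscr H^{n-1}$-a.e.\ $\omega\in\mb S^{n-1}$, so $\rho$ itself is AC, and the integrability comes from the norm identity. Conversely, given AC $\rho$ with finite integral, mollifying $\rho$ produces smooth $\rho_k$ and applying the norm identity to $\rho-\rho_k$ shows that the corresponding smooth radial stretchings $u_k$ form a Cauchy sequence in $W^{1,p}$, so $u\in W^{1,p}$ with weak derivative represented by the pointwise formula \eqref{eq:derivative}.

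Finally, if $u$ solves \eqref{eq:jac}, then \eqref{eq:jacobianradialstretching} rearranges to $(\rho^n)'(r)=n r^{n-1}f(r)$ a.e.\ in $(0,R)$, and integrating from $0$ to $r$ produces \eqref{eq:rho} provided $\rho(0)=0$. Single-valuedness of $u$ at the origin forces $\rho(0)\omega$ to be independent of $\omega\in\mb S^{n-1}$, hence $\rho(0)=0$; the requisite continuity at the origin is supplied by Theorem \ref{thm:mappingsofFD} in the critical case $p=n$ (the non-degeneracy hypothesis in \eqref{eq:hypdata} makes $u$ a map of finite distortion), and by Morrey's embedding in the supercritical case $p>n$. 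No individual step presents a genuine obstacle; the main care lies in matching the ACL-based approximation arguments and in justifying $\rho(0)=0$.
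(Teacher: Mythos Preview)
The paper does not supply its own proof of this lemma; it is stated with a citation to \cite[Lemma 4.1]{Ball1982} and then used without argument. Your sketch is essentially correct and covers the standard ingredients: the pointwise computation of \eqref{eq:derivative}, the eigenvalue analysis giving \eqref{eq:jacobianradialstretching} and the operator-norm comparison, the polar-coordinates integration for the $L^p$ identity, and the ACL characterisation for the Sobolev equivalence.

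Two minor comments. First, in the converse direction your mollification argument relies implicitly on the density of smooth functions in the weighted one-dimensional Sobolev space with weight $r^{n-1}$; this is true but perhaps worth flagging, and a direct integration-by-parts verification of the weak derivative (using the absolute continuity of $\rho$ on compact subintervals) avoids the issue altogether. Second, your derivation of $\rho(0)=0$ via continuity at the origin (Theorem~\ref{thm:mappingsofFD} or Morrey) is valid for $p\ge n$, which is the regime the paper cares about, but does not cover $1\le p<n$. A route that works uniformly in $p$ is to integrate $(\rho^n)'=nr^{n-1}f$ from $r$ to $R$ using the boundary condition $\rho(R)=R$ from \eqref{eq:jac}; the compatibility condition in \eqref{eq:hypdata} then forces $\lim_{r\to 0}\rho^n(r)=0$, and \eqref{eq:rho} follows without any appeal to continuity of $u$.
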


We also record here the standard notation $\p_r u(x) \equiv (\D u(x))\cdot \frac{x}{r}$; in particular, if $u$ is a radial stretching as in Lemma \ref{lemma:ball}, then
\begin{equation}
\label{eq:radialderivative}
\p_r u(x) = \dot \rho(r) \frac{x}{r}.
\end{equation}

The following proposition follows easily from Lemma \ref{lemma:ball}, see also \cite[Theorem 6]{Ye1994}.

\begin{prop}\label{prop:ye}
Let $\Omega=B_R(0)$ and $p\in [1, +\infty]$. If $f\in L^p(\Omega)$ is radially symmetric and satisfies \eqref{eq:hypdata} then the unique radial stretching $u$ solving \eqref{eq:jac} is in $W^{1,p}(B_R(0))$ and 
\begin{equation}
\Vert \D u \Vert_{L^{p}(B_R(0))} \lesssim_n \frac{\Vert f \Vert_{L^p(B_R(0))}}{(\inf f)^\frac{n-1}{ n}} + R^{\frac{n-1}{p}}\Vert f \Vert_{L^p(B_R(0))}^{\frac 1 n}.
\label{eq:radialestimate}
\end{equation}Moreover, this inequality is sharp: in general $u\not \in W^{1,q}$ for any $q>p$.
\end{prop}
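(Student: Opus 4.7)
The plan is to apply Lemma \ref{lemma:ball}, which reduces \eqref{eq:radialestimate} to bounding the two integrals
\[
I_1 \equiv \int_0^R |\dot\rho(r)|^p\, r^{n-1}\d r, \qquad I_2 \equiv \int_0^R (\rho(r)/r)^p\, r^{n-1}\d r.
\]
Differentiating \eqref{eq:rho} gives $\dot\rho(r)\rho(r)^{n-1} = f(r) r^{n-1}$, and the non-degeneracy $f\geq \inf f$ in \eqref{eq:hypdata} provides the lower bound $\rho(r)\geq (\inf f)^{1/n}r$. Together these yield the pointwise inequality $|\dot\rho(r)|\leq f(r)/(\inf f)^{(n-1)/n}$; passing from the radial integral to the full $L^p$ norm then produces $I_1^{1/p}\lesssim_n \Vert f\Vert_{L^p(B_R(0))}/(\inf f)^{(n-1)/n}$, which is the first summand.

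For the second summand I would estimate $\rho$ from above via Hölder's inequality against the measure $s^{n-1}\d s$: for $1<p<\infty$,
\[
\rho(r)^n = n\int_0^r f(s)s^{n-1}\d s \leq n\biggl(\int_0^r f^p s^{n-1}\d s\biggr)^{1/p}\biggl(\int_0^r s^{n-1}\d s\biggr)^{1/p'}\lesssim_n \Vert f\Vert_{L^p(B_R(0))}\,r^{n-n/p}.
\]
This gives $\rho(r)/r\lesssim_n \Vert f\Vert_{L^p(B_R(0))}^{1/n} r^{-1/p}$; raising to the $p$-th power, multiplying by $r^{n-1}$ and integrating on $(0,R)$ yields the second summand. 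The endpoint cases $p=1,\infty$ are handled by the direct bounds $\rho(r)\leq R$ (using the compatibility condition and monotonicity of $\rho$) and $\rho(r)\leq \Vert f\Vert_{L^\infty}^{1/n} r$, respectively.

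For the sharpness claim, the key observation is that if $f$ has its singular set concentrated on a sphere $\{|x|=r_0\}$ with $r_0\in(0,R)$, then $\rho(r_0)>0$ and $\rho$ is continuous at $r_0$, so the identity $\dot\rho(r)=f(r)r^{n-1}/\rho(r)^{n-1}$ forces $\dot\rho\approx f$ in a neighbourhood of $r_0$. I would therefore take a radial datum
\[
f(r) = c_0 + \varphi(r-r_0), \qquad \varphi(t)=|t|^{-1/p}\bigl(\log(e+1/|t|)\bigr)^{-s}, \quad s\in(1/p,1],
\]
with $c_0>0$ and $\varphi$ suitably rescaled so that \eqref{eq:hypdata} holds. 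A direct one-dimensional computation shows that $f\in L^p(B_R(0))\setminus \bigcup_{q>p}L^q(B_R(0))$, and hence $\dot\rho\in L^p(r^{n-1}\d r)\setminus \bigcup_{q>p}L^q(r^{n-1}\d r)$, so $u\in W^{1,p}\setminus \bigcup_{q>p}W^{1,q}$. The main subtlety is that a singularity at the origin (e.g.\ $f(x)=1+|x|^{-n/p}$) would \emph{not} demonstrate sharpness: there $\rho(r)$ grows faster than $r$, cancelling the blow-up of $f$ in the formula for $\dot\rho$, and in fact yielding the much better regularity $u\in W^{1,np}$. Moving the singularity onto a sphere sidesteps this cancellation and delivers the claimed sharpness.
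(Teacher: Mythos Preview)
Your argument is correct and follows essentially the same route as the paper: the bound on $\dot\rho$ is identical, and your H\"older estimate for $\rho(r)/r$ is equivalent to the paper's Jensen step. For sharpness the paper argues more directly---since $\rho$ is bounded and $r\ge\delta$ on $(\delta,R)$, one has $C_\delta f(r)\le\dot\rho(r)$ there, so \emph{any} radial $f\in L^p\setminus L^q$ with its bad part away from the origin suffices---but your explicit example and your remark about why an origin singularity fails capture exactly the same idea.
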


\begin{proof}
We assume that $1\leq p<\infty$, as the case $p=\infty$ is similar. Using \eqref{eq:rho} and Jensen's inequality, we deduce that
$$\left\vert \frac{\rho(r)}{r}\right\vert^p\lesssim \left(\fint_{B_r(0)} f(x) \d x\right)^\frac p n \leq \frac 1 r \left(\int_{B_r(0)} f(x)^p \d x \right)^\frac 1 n\leq \frac 1 r \,\Vert f \Vert_{L^p(B_R(0))}^\frac p n
$$
and therefore, integrating in $r$,
$$
\int_0^R \left\vert \frac{\rho(r)}{r}\right\vert^p r^{n-1} \d r 
\lesssim R^{n-1} \Vert f \Vert_{L^p(B_R(0))}^\frac p n.
$$
Since $f$ satisfies \eqref{eq:hypdata}, we deduce that $(\inf f)^{\frac 1 n}\leq \rho(r)/r$. Thus, from \eqref{eq:jacobianradialstretching}, we have
$$\dot \rho(r) = \frac{r^{n-1}}{\rho^{n-1}(r)} f(r)
\quad
\implies
\quad
\int_0^R |\dot \rho(r)|^p r^{n-1} \d r \lesssim \frac{\Vert f\Vert_{L^p(B_R(0))}^p}{(\inf f)^{\frac{p(n-1)}{ n}}}
$$
and the desired estimate follows from Lemma \ref{lemma:ball}. To see that this inequality is sharp, it suffices to note that for $\delta>0$ there is a constant $C_\delta$ such that $C_\delta f(r) \leq \dot \rho(r)$ for $r\in (\delta,R)$. By choosing $f\in L^p(0,R)$ such that $f\not\in L^{q}(\delta,R)$ for $q>p$ we have $\D u\not \in L^{q}(B_R(0))$.
\end{proof}

\section{Non-solvability of the prescribed Jacobian equation}\label{sec:proof}

This section is dedicated to the proof of the main result. We begin by taking $\Omega=B_1(0)$.

For $p\in [1,+\infty)$ and $\eta\in [0,\frac{c-1}{c})$, let
\begin{equation}
\begin{split}
Z_p^{\eta} & \equiv \left\{f\in L^p(B_1): \fint_{B_1} f \d x = 1, \, f\geq (1-\eta)c\right\}, \qquad \tp{for } p>1\\
Z_1^{\eta} & \equiv \left\{f\in L\log L(B_1): \fint_{B_1} f \d x = 1, \, f\geq (1-\eta)c\right\};
\label{eq:defZeta}
\end{split}
\end{equation}
and write $Z_p\equiv Z^0_p$. We make a few immediate remarks about the sets $Z_p^\eta$:
\begin{enumerate}
\item clearly $Z_p^{\eta_1}\subset Z_p^{\eta_2}$ if $\eta_1<\eta_2$;
\item each set $Z_p^\eta$ is a complete metric space under the distance
$$\tp{dist}_p(f,g)\equiv 
\begin{cases}
\Vert f -g \Vert_{L^p(B_1)}, & p>1\\
\Vert f -g \Vert_{L\log L(B_1)}, & p=1
\end{cases};$$
\item if $c=1$ then the only elements of $Z_p$ are functions which are 1 a.e.\ in $B_1$. The same holds, more generally, if $\eta= \frac{c-1}{c}$. Thus we assume throughout this section, without loss of generality, that $c<1$ and $\eta<\frac{c-1}{c}$.
\end{enumerate}
We write $B_{Z^\eta_p}(f,\e)$ for a ball of radius $\e$  around $f$, under this distance.
Given $f\in Z_p$ and $\max \{p,n\}\leq q$, we consider the $q$-\textit{energy of the datum} $f$, defined by
$$\mc E_q(f)\equiv 
\inf\left\{\int_{B_1}|\D v|^{q}\d x : v \tp{ solves } \eqref{eq:jac}\right\}.$$
Thus \eqref{eq:jac} admits a $W^{1,q}$ solution if and only if $\mc E_q(f)<\infty$. We say that $w$ is a $q$-\textit{energy-minimal solution} for $f$ if $w$ solves \eqref{eq:jac} and moreover
$$\mc E_q(f) = \int_{B_1(0)} |\D w|^q \d x.$$

We begin with the following lemma:

\begin{lemma}\label{lemma:perturbation}
Fix $1\leq p<\infty$ and let $\e,\eta>0$ be arbitrary. For any $f\in Z_p$
there are sequences $f_j\in B_{Z^\eta_p}(f,\e)$ and $R_j\nearrow 1$ such that 
\begin{enumerate}
\item\label{it:perturbation} $\tp{dist}_p(f_j,f)\to 0$ as $j\to \infty$;
\item\label{it:symmetry} $f_j$ is radially symmetric in $\mb A(R_j,1)$;
\item\label{it:radialblowup}  if $u_j$ is the radial stretching such that $\J u_j=f_j$ in $\mb A(R_j,1)$ and $u_j=\tp{id}$ on $\mb S^{n-1}$, then
$$\lim_{j\to \infty} \int_{\mb A(R_j,1)} |\p_r u_j(x)|^q\d x =  +\infty$$
for any $q>p$.
\end{enumerate}
\end{lemma}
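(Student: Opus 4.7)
The plan is to build $f_j$ by inserting a narrow, radial \emph{spike} inside a thin collar $\mb A(R_j,1)$ while keeping $f$ essentially unchanged on $B_{R_j}$ up to an additive constant that restores the mean. The sizes of the spike and of the collar are to be tuned so that the perturbation is negligible in the ambient norm on $X_p$ yet produces radial Jacobians whose $L^q$-mass blows up; since $\rho_j\approx 1$ on the thin collar, point (iii) will reduce via \eqref{eq:jacobianradialstretching}--\eqref{eq:radialderivative} to the unboundedness of $\int f_j^q$ on the spike.

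Concretely, I would fix $\delta_j\searrow 0$, set $R_j:=1-\delta_j$, and choose $M_j\nearrow+\infty$ and $\mu_j\searrow 0$ with $\mu_j\leq n\omega_n\delta_j/2$ such that
\[
M_j^p\mu_j\to 0\ \ (\tp{resp.\ }M_j\mu_j\log M_j\to 0\tp{ if }p=1),\qquad M_j^q\mu_j\to+\infty\tp{ for every }q>p.
\]
Explicit choices are $M_j=j$, $\mu_j=j^{-p}(\log j)^{-2}$ when $p>1$ and $\mu_j=j^{-1}(\log j)^{-2}$ when $p=1$. Pick $r_j:=R_j$ and $\ell_j$ so that the sub-shell $\mb A(r_j,r_j+\ell_j)\subset\mb A(R_j,1)$ has volume $\mu_j$, and define, radially on $\mb A(R_j,1)$,
\[
h_j(r):=M_j\,\chi_{(r_j,r_j+\ell_j)}(r)+(1-\eta)c\,\chi_{(R_j,1)\setminus(r_j,r_j+\ell_j)}(r),
\]
setting $f_j(x):=h_j(|x|)$ on $\mb A(R_j,1)$ and $f_j(x):=f(x)+\beta_j$ on $B_{R_j}$, where $\beta_j$ is the unique constant making $\fint_{B_1}f_j\d x=1$. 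Since
$\beta_j=|B_{R_j}|^{-1}\bigl(\int_{\mb A(R_j,1)}f\d x-\int_{\mb A(R_j,1)}h_j(|x|)\d x\bigr)$
and both integrals tend to $0$, we have $\beta_j\to 0$, so for $j$ large $\beta_j\geq-\eta c$ and hence $f_j\geq(1-\eta)c$ on $B_1$, giving $f_j\in Z_p^\eta$ and (ii).

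For (i) in the case $p>1$, $\Vert f_j-f\Vert_{L^p(B_{R_j})}=|\beta_j||B_{R_j}|^{1/p}\to 0$ and
\[
\Vert f_j-f\Vert_{L^p(\mb A(R_j,1))}\lesssim (M_j^p\mu_j)^{1/p}+\Vert f\Vert_{L^p(\mb A(R_j,1))}+c\,|\mb A(R_j,1)|^{1/p}\to 0,
\]
by absolute continuity of the $L^p$-norm of $f$. For (iii), integrating \eqref{eq:rho} backwards from $r=1$ and using $\rho_j(1)=1$ gives $\rho_j^n(r)=1-\int_r^1 nf_j(s)s^{n-1}\d s$ on $(R_j,1)$, whose right-hand side tends to $1$ uniformly because the integral is bounded by $n[(1-\eta)c\,\delta_j+M_j\ell_j]\to 0$. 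Combining \eqref{eq:jacobianradialstretching} and \eqref{eq:radialderivative}, $|\p_r u_j(x)|=|\dot\rho_j(r)|=r^{n-1}\rho_j^{1-n}(r)f_j(r)\approx f_j(r)$ on $(R_j,1)$ for $j$ large, so by Lemma \ref{lemma:ball},
\[
\int_{\mb A(R_j,1)}|\p_r u_j|^q\d x=n\omega_n\int_{R_j}^1|\dot\rho_j|^q r^{n-1}\d r\gtrsim M_j^q\ell_j r_j^{n-1}\approx M_j^q\mu_j\to+\infty
\]
for every $q>p$, which is (iii).

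I expect the main technical subtlety to lie in the case $p=1$, because the $L\log L$-norm \eqref{eq:LlogL} contains the self-normalising factor $|g|/\Vert g\Vert_1$, which may amplify the logarithm when $g:=f_j-f$ is itself small in $L^1$. The resolution is that on $B_{R_j}$ the perturbation is the constant $\beta_j$ and $\Vert g\Vert_1\geq|\beta_j||B_{R_j}|$, so $|\beta_j|/\Vert g\Vert_1\leq 1/|B_{R_j}|$ and the logarithm contributes only a uniformly bounded factor, while on the spike $M_j/\Vert g\Vert_1\leq 1/\mu_j$, so the spike contribution to $\Vert g\Vert_{L\log L}$ is at most $C M_j\mu_j\log(1/\mu_j)$, which for $M_j=j$, $\mu_j=j^{-1}(\log j)^{-2}$ behaves like $1/\log j\to 0$. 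Choosing $\delta_j\to 0$ sufficiently slowly then also kills the background contribution on the collar as well as the piece involving $f$ on $\mb A(R_j,1)$, via absolute continuity of $\Vert f\Vert_{L\log L}$.
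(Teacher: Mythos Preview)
Your construction is correct and achieves the lemma, but it is genuinely different from the paper's. The paper modifies $f$ \emph{multiplicatively} on $B_R$ (setting $f_{\gamma,R}=\frac{\gamma^n}{\fint_{B_R}f}\,f$ there) and on the annulus prescribes the specific profile
\[
f_{\gamma,R}(x)=\frac{M[\gamma R+M(|x|-R)]^{n-1}}{|x|^{n-1}},\qquad M=\frac{1-\gamma R}{1-R},
\]
which is engineered so that the radial solution is \emph{exactly affine}, $\rho_{\gamma,R}(r)=\gamma R+M(r-R)$. The single scaling relation $1-\gamma R=(1-R)^{1+\alpha/q}$ with $\alpha\in(-q/p,-1)$ then makes both the $L^p$-smallness and the $L^q$-blowup immediate one-line computations. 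Your approach instead uses an \emph{additive} correction $\beta_j$ on the inner ball and a characteristic-function spike on the collar; this is the classical ``tall--thin spike'' mechanism ($M_j^p\mu_j\to 0$, $M_j^q\mu_j\to\infty$), and you recover (iii) via the extra observation $\rho_j\to 1$ uniformly so that $\dot\rho_j\approx f_j$. What the paper's choice buys is that $\dot\rho$ is constant on the annulus, which feeds directly into the subsequent comparison with general solutions (their Proposition~4.2 uses $\p_r u(r\theta)=M\theta$); your construction would require a mild adaptation of that step. What your approach buys is conceptual transparency: the spike is a textbook object and the tuning of $M_j,\mu_j$ is decoupled from the geometry of the annulus.

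One point where your write-up should be tightened is the $p=1$ estimate. Your bound ``$M_j/\Vert g\Vert_1\le 1/\mu_j$'' presumes $\Vert g\Vert_1\ge M_j\mu_j$, but on the spike $g=M_j-f$ and $\int_{\text{spike}}f$ need not be $o(M_j\mu_j)$ for a general $f\in L\log L$. The clean fix is to use the triangle inequality for $\Vert\cdot\Vert_{L\log L}$ and estimate $\Vert\beta_j\chi_{B_{R_j}}\Vert_{L\log L}$, $\Vert f_j\chi_{\mb A(R_j,1)}\Vert_{L\log L}$ and $\Vert f\chi_{\mb A(R_j,1)}\Vert_{L\log L}$ separately; for the last of these one checks the elementary absolute-continuity fact $\Vert f\chi_E\Vert_{L\log L(B_1)}\to 0$ as $|E|\to 0$ (split $\log(e+|f|/\Vert f\chi_E\Vert_1)\le\log(e+|f|/\Vert f\Vert_1)+\log(\Vert f\Vert_1/\Vert f\chi_E\Vert_1)$), while for $f_j\chi_{\mb A}$ your computation $M_j\mu_j\log(1/\mu_j)\to 0$ already gives the spike term. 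With this adjustment the argument is complete.
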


\begin{proof}
For $\gamma, R \in (\frac 3 4,1)$, consider the functions
\begin{equation}
\label{eq:deffgammaR}
f_{\gamma,R}(x) \equiv \begin{cases}
\frac{\gamma^n}{\fint_{B_R} f \d x} f(x), & 0 < |x| < R, \\
\frac{ M [\gamma R + M (|x|-R)]^{n-1}}{|x|^{n-1}}, & R \le |x| < 1,
\end{cases}
\qquad \tp{where } M \equiv \frac{1-\gamma R}{1-R}.
\end{equation}
The choice of $M$ ensures that $\fint_{B_1} f_{\gamma, R}\d x = 1$, since
\begin{equation}
\label{eq:intfgammaR}
\int_{R}^1 f_{\gamma,R}(r) r^{n-1} \d r = \frac{1-(\gamma R)^n}{n}.
\end{equation}
Fix $\alpha\in (-\frac{q}{p},-1)$ and choose $\gamma=\gamma(R)$ in such a way that
\begin{equation}
\label{eq:choicegamma}
1-\gamma R  = (1-R)^{1+\alpha/q};
\end{equation}
in particular, we have $\gamma(R)\to 1$ as $R\to 1$.
Take any sequence $R_j \nearrow 1$ and consider the associated numbers $\gamma_j=\gamma_j(R_j)$. We will prove that, for $j$ large enough, $f_j\equiv f_{\gamma_j,R_j}$ satisfies the properties above. 

Concerning the lower bounds of the sequence, for $r\in (R,1)$, we have the simple estimate
$$f_{\gamma,R}(r) = M \left(\frac{\gamma R + M(r-R)}{r}\right)^{n-1} \geq 1\times \gamma^{n-1};$$ 
thus $f_{\gamma,R}(r)>c$ for $\gamma$ sufficiently close to 1.
For $r\in (0,R)$, clearly $f_{\gamma,R}(r)\geq \gamma^n c/\fint_{B_R} f\to c$ as $\gamma,R\to 1$. Hence the lower bounds are satisfied.
Moreover, \ref{it:symmetry} clearly holds. 

For \ref{it:radialblowup},
denote by $u_{\gamma,R}(x) = \rho_{\gamma,R}(r) \frac{x}{r}$ the unique radial stretching solving $\J u_{\gamma,R} = f_{\gamma,R}$ in $\mb A(R,1)$ and such that $u_{\gamma,R}=\tp{id}$ on $\mb S^{n-1}$. By Lemma \ref{lemma:ball} we find that
\begin{equation} \label{eq:radialsol}
\rho_{\gamma,R}(r) =\gamma R + M (r-R),
\qquad r\in (R,1]
\end{equation}
and so, by \eqref{eq:radialderivative}, since $r^{n-1} \d r\approx \d r$ for $r\in (\frac 1 2, 1)$,
\[\int_{\mathbb{A}(R,1)} |\partial_r u_{\gamma,R}(x)|^{q} \d x\approx 
\int_R^1 |\dot \rho(r)|^q \d r = 
M^{q} (1-R)= \frac{(1-\gamma R)^{q}}{(1-R)^{q-1}}= (1-R)^{1+\alpha}.\]
Thus, since $\alpha <-1$, we see that \ref{it:radialblowup} also holds.

Hence it remains to prove \ref{it:perturbation}, and we split the proof into two cases.

\textbf{Case $\bm{p>1}$:}
Whenever $\gamma,R \nearrow 1$, we have
$$\int_{B_R} |f-f_{\gamma,R}|^p \d x = \left| \frac{\gamma^n}{\fint_{B_R} f \d x} - 1 \right|^p \int_{B_R} |f|^p \d x \to 0.$$
Thus, since $\int_{\mb A(R,1)} |f(x)|^p\d x\to 0$ as $R\to 1$, in order to prove that $f_{\gamma,R}\to f$ in $L^p(B_1)$ it suffices to show that, as $\gamma,R\nearrow 1$, 
\begin{equation}
\label{eq:convergencegoal}
\int_{\mb A(R,1)} |f_{\gamma,R}(x)|^p \d x\to 0.
\end{equation}
For this, note that for $|x|=r\in (R,1) $, since $\gamma R<1$ and $R>\frac 1 2$,
$$f_{\gamma,R}(x) \leq \frac{M}{r^{n-1}}
\leq 2^{n-1} \frac{(1-\gamma R)}{1-R}.$$
Thus, as $r^{n-1} \d r \approx \d r$ for $r\in (\frac 1 2, 1)$,
$$\int_{\mb A(R,1)} |f_{\gamma,R}(x)|^p \d x
\approx \int_R^1 |f_{\gamma,R}(r)|^p \d r \lesssim \frac{(1-\gamma R)^p}{(1-R)^{p-1}}= (1-R)^{\alpha\frac p q  +1}.$$
Since $\a>-\frac q p$ we have that $\a \frac p q + 1>0$ and so \eqref{eq:convergencegoal} is proved.

\textbf{Case \bm{$p=1$}:}
As before, we have that, as $\gamma,R\nearrow 1$,
\begin{align*}
\|f_{\gamma,R}-f\|_{L\log L(B_R)} = \left|1-\frac{\gamma^n}{\fint_{B_R} f\d x}\right|\|f\|_{L\log L(B_R)}\to 0
\end{align*}
and also $\|f \|_{L\log L(\mb A(R,1))} \to 0$ as $R\to 1$. Thus it suffices to show $\|f_{\gamma,R}\|_{L\log L(\mb A(R,1)}\to 0$ as $R\to 1$. 
We make the simple observation that
$$1-(\gamma R)^n =(1-\gamma R)(1+\gamma R + \dots + (\gamma R)^{n-1})\approx_n (1-\gamma R).$$
Then, using the estimates on $f_{\gamma,R}$ from the previous case and \eqref{eq:intfgammaR}, we obtain
\begin{align*}
\|f_{\gamma,R}\|_{L\log L(\mb A(R,1))} & 
\approx \int_{\mb A(R,1)} f_{\gamma,R}(x) \log\left(e+\frac{f_{\gamma,R}(x)}{\|f_{\gamma,R}\|_{L^1(\mb A(R,1))}}\right)\d x\\
& \lesssim \int_R^1 \frac{1-\gamma R}{1-R} \log \left(e+\frac{1-\gamma R}{(1-R)(1-\gamma^n R^n)}\right)\d r\\
& \leq (1-R)^{\alpha/q+1}\log\left(e+\frac{1}{1-R}\right).
\end{align*}
The right-hand side converges to zero as $R\to 1$, since $\alpha/q+1>0$.
\end{proof}

Our goal is to show that  $\mc E_q(f_{\gamma,R})\to +\infty$ as $\gamma,R\to 1$. The idea is that energy-minimal solutions are controlled by the radial solution. 

\begin{prop}\label{prop:quasiminimizers}
Fix $1\leq p<q$ and suppose that $v\in W^{1,n}(B_1,\R^n)$ is a solution of $\tp{J} v = f_{\gamma,R}$ with $v=\tp{id}$ on $\p B_1$, where $f_{\gamma,R}$ is as in \eqref{eq:deffgammaR} and \eqref{eq:choicegamma}.  There is a constant $C=C(n,q)>0$ such that
$$\int_{\mb A(R,1)} |\p_r u(x)|^{q}\d x \leq 
C\int_{\mb A(R,1)} |\p_{r} v(x)|^{q}\d x,
$$ where $u=u_{\gamma,R}$ is the radial stretching such that $\J u_{\gamma,R}=f_{\gamma,R}$ in $\mb A(R,1)$ and $u=\tp{id}$ on $\mb S^{n-1}$.
\end{prop}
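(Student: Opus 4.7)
The plan is to show, via a slicing argument in spherical coordinates, that any competitor $v$ of the radial stretching $u$ must carry $B_R$ onto a set of the right volume, and this forces enough radial derivative on the annulus $\mb A(R,1)$. The radial stretching achieves the resulting bound with equality (up to dimensional constants) by design.

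Concretely, I begin by computing the left-hand side. By \eqref{eq:radialsol} and \eqref{eq:radialderivative}, $|\p_r u(x)|=\dot\rho_{\gamma,R}(r)=M$ pointwise on $\mb A(R,1)$, so
\[\int_{\mb A(R,1)}|\p_r u|^q\d x=M^q\mathscr L^n(\mb A(R,1))\approx_n M^q(1-R).\]
For the lower bound on $\int_{\mb A(R,1)}|\p_r v|^q\d x$, I slice along radial rays. For $\mathscr H^{n-1}$-a.e.\ $\omega\in\mb S^{n-1}$, the map $t\mapsto v(t\omega)$ is absolutely continuous on $[R,1]$ (by Fubini and the ACL characterisation of $W^{1,n}$), and $v(\omega)=\omega$ by the trace condition. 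Setting $g_\omega(t)\equiv|v(t\omega)|$, one has $g_\omega(1)=1$ and $|g_\omega'(t)|\leq|\p_r v(t\omega)|$. The fundamental theorem of calculus combined with Jensen's inequality yields
\[\int_R^1|\p_r v(t\omega)|^q\d t\geq\frac{|1-|v(R\omega)||^q}{(1-R)^{q-1}},\]
and spherical Fubini (with $r^{n-1}\approx 1$ on $(R,1)\subset (\tfrac34,1)$) gives
\[\int_{\mb A(R,1)}|\p_r v|^q\d x\gtrsim_n\frac{1}{(1-R)^{q-1}}\int_{\mb S^{n-1}}|1-|v(R\omega)||^q\d\omega.\]

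It then suffices to prove the averaged volumetric lower bound
\[\int_{\mb S^{n-1}}|1-|v(R\omega)||^q\d\omega\gtrsim_n (1-\gamma R)^q,\]
since plugging this into the slicing estimate yields the desired $\int|\p_r v|^q\gtrsim M^q(1-R)$. The key ingredients are Corollary \ref{cor:changeofvars}, which asserts that $v$ is essentially injective with $\mathscr L^n(v(B_R))=\int_{B_R}f_{\gamma,R}\d x=\omega_n(\gamma R)^n$, together with the trace condition $v|_{\p B_1}=\tp{id}$ and a degree-theoretic argument (as in Corollary \ref{cor:solsarehomeos}) showing that $v(\overline B_1)=\overline B_1$, so in particular $|v(R\omega)|\leq 1$. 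For star-shaped $v(\p B_R)$ (parameterised as $\{\tau(\omega)\omega\}$), the identity $n\omega_n(\gamma R)^n=\int_{\mb S^{n-1}}\tau(\omega)^n\d\omega$ combined with the power-mean inequality gives $\int_{\mb S^{n-1}}\tau(\omega)\d\omega\leq n\omega_n\gamma R$, hence $\int_{\mb S^{n-1}}(1-|v(R\omega)|)\d\omega\gtrsim 1-\gamma R$; Jensen then promotes this to the required $L^q$ bound.

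The main obstacle is the passage from the volumetric identity $\mathscr L^n(v(B_R))=\omega_n(\gamma R)^n$ to the averaged pointwise lower bound on the ``radial displacement'' $\int(1-|v(R\omega)|)\d\omega$. A priori $v(\p B_R)$ need not be star-shaped with respect to the origin, so one must use the full topological structure: $v(\p B_R)$ is a connected topological $(n-1)$-sphere inside $\overline B_1$ (continuous injective image of $\p B_R$), bounding a region of the prescribed small deficit volume $\omega_n(1-(\gamma R)^n)$ against the outer shell. The issue is especially delicate in the bare $W^{1,n}$ regime, where Corollary \ref{cor:solsarehomeos} does not directly apply; the argument must reduce to the star-shaped case via an isoperimetric/rearrangement step exploiting connectedness of $v(\p B_R)$ and the boundary identity.
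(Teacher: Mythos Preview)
Your reduction via radial slicing and Jensen is sound, but the proof stalls exactly where you say it does, and the obstacle is genuine: you have not established the averaged lower bound $\int_{\mb S^{n-1}}|1-|v(R\omega)||\,\d\omega\gtrsim 1-\gamma R$, and the route you sketch does not close. The star-shaped case is a genuine special case, not a model to which one can reduce: there is no rearrangement or isoperimetric principle that replaces an arbitrary image $v(\p B_R)$ by a radial graph $\{\tau(\omega)\omega\}$ while simultaneously preserving the enclosed volume and the quantity $\int(1-|v(R\omega)|)\,\d\omega$. Worse, under the hypothesis $v\in W^{1,n}$ you cannot invoke Corollary \ref{cor:solsarehomeos}, so you do not know that $v$ is injective on $\p B_R$, that $v(\p B_R)$ is a topological sphere, or even that $v(\overline{B_1})\subset\overline{B_1}$; the whole topological picture you rely on is unavailable. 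Pinning everything to the single slice $r=R$ is what creates the difficulty.

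The paper avoids $r=R$ entirely and works with the full annulus. After the same H\"older reduction to an $L^1$ comparison, it partitions directions into $\Theta_1=\{\theta:\int_R^1|\p_r v(r\theta)|\,\d r\le\lambda\}$ and its complement $\Theta_2$. For $\theta\in\Theta_1$ the fundamental theorem of calculus gives $|v(r\theta)|\ge 1-\lambda$ for \emph{every} $r\in[R,1]$, so $v(\Theta_1\times[R,1])\subset\mb A[1-\lambda,1]$; note this needs no upper bound on $|v|$. Corollary \ref{cor:changeofvars} (valid in $W^{1,n}$) gives $\mathscr L^n(v(\mb A(R,1)))=\omega_n(1-(\gamma R)^n)$, forcing $\mathscr L^n(v(\Theta_2\times[R,1]))\ge\omega_n(1-(\gamma R)^n-n\lambda)$. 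On the other hand, since $f_{\gamma,R}$ is radial on $\mb A(R,1)$, that same volume equals $\mathscr H^{n-1}(\Theta_2)\cdot\tfrac{1-(\gamma R)^n}{n}$, and Markov's inequality bounds $\mathscr H^{n-1}(\Theta_2)\le\lambda^{-1}\int_{\mb A(R,1)}|\p_r v|$. Choosing $\lambda=\tfrac{1-(\gamma R)^n}{2n}$ yields the $L^1$ inequality directly. No structure of $v(\p B_R)$, no injectivity beyond the a.e.\ statement in Corollary \ref{cor:changeofvars}, and no regularity beyond $W^{1,n}$ is used.
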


\begin{proof}
Throughout the proof $\theta$ will denote an element of $\mb S^{n-1}$ and we write $r \theta$ for the corresponding element in a sphere of radius $r$.

Since $R > \frac 1 2$, from H\"{o}lder's inequality,
\begin{align*}
2^{n-1}\fint_{\mb S^{n-1}} \fint_R^1 |\p_r v(r\theta)|^{q} r^{n-1} \d r \d \theta
& \ge \fint_{\mb S^{n-1}} \fint_R^1 |\p_r v(r \theta)|^{q} \d r \d \theta \\
&\ge \left( \fint_{\mb S^{n-1}} \fint_R^1 |\p_r v(r \theta)| \d r \d \theta \right)^{q}.
\end{align*}
On the other hand, since $\p_r u(r \theta) = M \theta$ for all $r \in (R,1)$, c.f.\ \eqref{eq:radialsol}, we can estimate
\begin{align*}
\left( \fint_{\mb S^{n-1}} \fint_R^1 |\p_r u(r \theta)| \d r \d \theta \right)^{q}
= \fint_{\mb S^{n-1}} \fint_R^1 |\p_r u(r \theta)|^{q} \d r \d \theta 
\ge \fint_{\mb S^{n-1}} \fint_R^1 |\p_r u(r \theta)|^{q} r^{n-1} \d r \d \theta.
\end{align*}
Thus, the proposition will be proved once we show that
\begin{equation*} 
\label{eq:final boss}
\int_{\mb S^{n-1}} \int_R^1 |\p_r v(r \theta)| \d r \d \theta \ge  
\frac{1}{4n^2}  \int_{\mb S^{n-1}} \int_R^1 |\p_r u(r \theta)| \d r \d \theta
\end{equation*}
for all $R \in (\frac 1 2,1)$. Note that $\int_R^1 |\p_r u(r \theta)| \d r = (1-R) M = 1 - \gamma R$ for all $\theta \in\mb S^{n-1}$.

Let us take the set $$\Theta\equiv\{\theta\in\mb S^{n-1}: r\mapsto u(r\theta) \tp{ is absolutely continuous in } [R,1]\}.$$
For $\lambda\in(0,1)$ to be chosen later, let
\begin{align*}
\Theta_1 \equiv\left\{\theta \in \Theta: \int_R^1 |\p_r v(s\theta)| \d s \leq \lambda\right\},\qquad
\Theta_2 \equiv\left\{\theta \in \Theta: \int_R^1 |\p_r v(s\theta)| \d s > \lambda\right\}.
\end{align*}
By the ACL property of Sobolev functions, $\mathscr H^{n-1}(\Theta)=\mathscr H^{n-1}(\mb S^{n-1})$ and so the set  $\mb S^{n-1}\exc \Theta$ is an $\mathscr H^{n-1}$-null set. Fubini's theorem and the fact that $v$ satisfies the Lusin (N) property, c.f.\ Theorem \ref{thm:mappingsofFD}, then implies
$$\mathscr L^n\left(v(\Theta_1\times [R,1])\right)+ \mathscr L^n\left(v(\Theta_2\times [R,1])\right)
= \mathscr L^n (v(\mb A(R,1)).
$$

By the fundamental theorem of calculus, for $\theta \in \Theta_1$, $r\in [R,1]$,
$$1-|v(r \theta)|\leq |v(\theta) - v(r \theta)| \leq \int_r^1 |\p_r v(s \theta)| \d s \leq \lambda$$
and thus $v(\Theta_1 \times [R,1])\subset \mb A[1-\lambda,1]$. Combined with the change of variables formula from Corollary \ref{cor:changeofvars}, \eqref{eq:intfgammaR} and Bernoulli's inequality, this estimate yields
\begin{align*}
\mathscr L^n(v(\Theta_2\times [R,1])) & = \int_{\mb A(R,1)} f_{\gamma,R}(x) \d x -\mathscr L^n(v(\Theta_1\times [R,1]))
\\ &\geq \omega_n \left[(1-\gamma^n R^n)-(1-(1-\lambda)^n)\right]\\
& \geq \omega_n \left(1-n\lambda -\gamma^n R^n\right),
\end{align*}
since $\mathscr H^{n-1}(\mb S^{n-1})=n\omega_n$.
Moreover, by Markov's inequality and \eqref{eq:intfgammaR},
\begin{align*}
\mathscr L^n(v(\Theta_2\times [R,1]))& = \int_{\Theta_2\times [R,1]} f_{\gamma,R}(x) \d x \\
& = \mathscr H^{n-1}(\Theta_2)\frac{1-\gamma^n R^n}{n} 
\leq \frac{1-\gamma^n R^n}{n \lambda} \int_{\mb A(R,1)} |\p_r v(r\theta)| \d r \d \theta.
\end{align*}
Combining the last two estimates and choosing $\lambda=\frac{1-\gamma^n R^n}{2n}$, we find
$$\frac{1}{4} \int_{\mb A(R,1)} |\p_r u(r \theta)| \d r \d \theta 
= \frac{ n \omega_n (1-\gamma R)}{4} 
\leq \frac{n \omega_n(1-(\gamma R)^n)}{4}
\leq \int_{\mb A(R,1)} |\p_r v(r \theta)| \d r \d \theta$$
since $(\gamma R)^n<\gamma R<1$.
The conclusion follows.
\end{proof}

\begin{remark}\label{remark:genBC}
It is clear from the proof that the boundary condition $v=\tp{id}$ on $\p B_1(0)$ can be weakened to the requirement that
$
v(\theta)\in \mb S^{n-1}\tp{ for }
\mathscr H^{n-1}\tp{-a.e. } \theta\in \mb S^{n-1}.$
Note that this condition is independent of the representative of the equivalence class of $v\in W^{1,q}(B_1(0),\R^n)$.
The argument above carries through simply by replacing the set $\Theta$ with $\Theta\cap \{v(\theta)\in \mb S^{n-1}\}$.
\end{remark}

Combining Lemma \ref{lemma:perturbation} with Proposition \ref{prop:quasiminimizers}, we immediately obtain:

\begin{cor}\label{cor:blowup}
Let $1\leq p< q<\infty$ and $n\leq q$. For any $\e,\eta>0$ and any $f\in Z_p$,
there is a sequence $f_j\in B_{Z_p^\eta}(f,\e)$ such that $\tp{dist}_p(f_j, f)\to 0$ and 
$\mc E_q(f_j)\to \infty.$
\end{cor}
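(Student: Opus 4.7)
The plan is to combine Lemma \ref{lemma:perturbation} and Proposition \ref{prop:quasiminimizers} directly; the remainder is bookkeeping. Given $\e, \eta > 0$ and $f \in Z_p$, I would first invoke Lemma \ref{lemma:perturbation} to produce sequences $R_j \nearrow 1$ and $f_j = f_{\gamma_j, R_j} \in B_{Z_p^\eta}(f, \e)$ satisfying $\tp{dist}_p(f_j, f) \to 0$, together with the corresponding radial stretchings $u_j$ solving $\J u_j = f_j$ in $\mb A(R_j, 1)$ with $u_j = \tp{id}$ on $\mb S^{n-1}$, such that
\[ I_j \equiv \int_{\mb A(R_j, 1)} |\p_r u_j(x)|^{q} \d x \to +\infty. \]

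Next, I would bound $\mc E_q(f_j)$ from below. If $f_j$ admits no $W^{1,q}$-solution then $\mc E_q(f_j) = +\infty$ by the convention that the infimum over the empty set is $+\infty$, and the conclusion holds trivially. Otherwise, let $v \in W^{1,q}(B_1, \R^n)$ be any solution of $\J v = f_j$ with $v = \tp{id}$ on $\p B_1$; since $n \leq q$ we have $v \in W^{1,n}$, so Proposition \ref{prop:quasiminimizers} applies with the pair $(\gamma_j, R_j)$ (which satisfies \eqref{eq:choicegamma} by construction) and produces a constant $C = C(n, q)$ for which
\[ I_j \leq C \int_{\mb A(R_j, 1)} |\p_r v|^{q} \d x \leq C \int_{B_1} |\D v|^{q} \d x, \]
where the second inequality is the pointwise bound $|\p_r v(x)| \leq |\D v(x)|$. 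Taking the infimum over all admissible $v$ yields $\mc E_q(f_j) \geq I_j / C$, and item (iii) of Lemma \ref{lemma:perturbation} then forces $\mc E_q(f_j) \to +\infty$.

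There is no substantive obstacle at this stage: all of the genuine work has been packaged into the two preceding results. Lemma \ref{lemma:perturbation} constructs the approximating data whose canonical radial solution has unbounded $q$-energy in the shrinking annulus, while Proposition \ref{prop:quasiminimizers} ensures that every admissible $W^{1,q}$-solution must carry a comparable amount of radial energy there, since it shares the outer boundary data and the total prescribed volume in the annulus with the radial solution.
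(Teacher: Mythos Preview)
Your proposal is correct and follows exactly the route the paper intends: the authors simply state that Corollary~\ref{cor:blowup} follows immediately by combining Lemma~\ref{lemma:perturbation} with Proposition~\ref{prop:quasiminimizers}, and you have accurately supplied the details of that combination, including the case distinction on whether a $W^{1,q}$ solution exists and the pointwise inequality $|\p_r v|\leq |\D v|$.
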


We are ready to prove our main result, which we restate here for convenience of the reader.

\begin{thm}
Fix $0<c<1$ and $1\leq p<\infty$ .
The set of those $f\in Z_p$ such that there is a solution $u\in \bigcup_{n\leq q,\, p<q} W^{1,q}(\Omega,\R^n)$ of \eqref{eq:jac} is meagre in $Z_p$.
\end{thm}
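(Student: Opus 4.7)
The plan is a Baire category argument combining Corollary \ref{cor:blowup} with the weak continuity of the Jacobian. Write the set in question as the countable union $\bigcup_{q,N} A_{q,N}$, where
\[
A_{q,N} := \{f \in Z_p : \mc E_q(f) \le N\},
\]
$q$ ranges over the rationals in $(p,\infty)\cap[n,\infty)$, and $N \in \N$. It suffices to show that each $A_{q,N}$ is closed and has empty interior in the complete metric space $Z_p$.

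Closedness amounts to the lower semicontinuity of $\mc E_q$ on $Z_p$. Given $f_k \to f$ in $Z_p$ with $\mc E_q(f_k) \le N$, I pick near-minimal solutions $u_k$ satisfying $\Vert \D u_k \Vert_{L^q}^q \le N + 1/k$. The boundary condition $u_k = \tp{id}$ on $\p B_1$ and Poincar\'e's inequality bound $u_k$ in $W^{1,q}$, so along a subsequence $u_k \rightharpoonup u$ weakly; the trace is weakly continuous, so $u = \tp{id}$ on $\p B_1$. Since $q \ge n$, the classical weak continuity of the Jacobian gives $\J u_k \rightharpoonup \J u$ distributionally, and combined with $f_k \to f$ in $L^p$ this forces $\J u = f$ a.e. Weak lower semicontinuity of $\Vert \cdot \Vert_{L^q}^q$ then yields $\mc E_q(f) \le \Vert \D u \Vert_{L^q}^q \le N$.

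For empty interior, suppose for contradiction that some ball $B_{Z_p}(f_0,\e) \subset A_{q,N}$. Corollary \ref{cor:blowup}, applied with some small $\eta > 0$, produces a sequence $f_j$ with $\tp{dist}_p(f_j, f_0) \to 0$ and $\mc E_q(f_j) \to \infty$, which would immediately contradict the uniform bound $\mc E_q \le N$ on the ball---provided $f_j$ actually lies in $Z_p$. The remaining subtlety is that Corollary \ref{cor:blowup} places $f_j$ only in the slightly larger space $Z_p^\eta$. I would resolve this by refining Lemma \ref{lemma:perturbation}: on $B_R$ replace the rescaling $(\gamma^n/\fint_{B_R} f) f$ by the convex combination $(1-\lambda) f + \lambda c$, where $\lambda := (\fint_{B_R} f - \gamma^n)/(\fint_{B_R} f - c) \to 0$ is chosen to preserve the interior mass $\int_{B_R} f_{\gamma,R} = \gamma^n |B_R|$. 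Since $f \ge c$, this keeps $f_j \ge c$ pointwise on $B_R$; the annular piece of $f_{\gamma,R}$ and the mass $\omega_n(1-(\gamma R)^n)$ it carries are untouched, so the radial stretching on the annulus and the estimate driving Proposition \ref{prop:quasiminimizers} are unaffected; and $\Vert f_j - f \Vert$ in $L^p$ (or $L\log L$) still tends to $0$ as $R \to 1$, since $\Vert \lambda (c-f) \Vert \lesssim \lambda \Vert f \Vert \to 0$.

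The main obstacle is exactly this compatibility between the blow-up construction and the strict lower bound defining $Z_p$; once the refined perturbation is in place, Baire's theorem concludes that $\bigcup_{q,N} A_{q,N}$ is meagre in $Z_p$.
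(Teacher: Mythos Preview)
Your argument is correct and follows the same architecture as the paper: decompose the bad set as a countable union of sub-level sets $\{\mc E_q\le N\}$, show closedness via weak continuity of the Jacobian together with weak lower semicontinuity of the $L^q$-norm, and show empty interior via Corollary~\ref{cor:blowup}. Your use of near-minimisers with $\Vert \D u_k\Vert_{L^q}^q\le N+1/k$ is a small improvement over the paper, which tacitly assumes the infimum is attained.

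The one genuine difference is in how the constraint $f\ge c$ is preserved under the perturbation. The paper does not modify Lemma~\ref{lemma:perturbation}; instead it replaces $c$ by $c/(1-\delta)$ in the definition of $Z_p^\eta$, so that the enlarged space $Z_p^\delta$ in the new normalisation coincides with the original $Z_p$, and then approximates an arbitrary $f_0\in Z_p$ by some $f_1$ satisfying the stricter bound $f_1\ge c/(1-\delta)$ before applying Corollary~\ref{cor:blowup}. Your route---replacing the multiplicative rescaling on $B_R$ by the affine interpolation $(1-\lambda)f+\lambda c$---is arguably cleaner: the inequality $(1-\lambda)f+\lambda c\ge c$ holds for every real $\lambda$ once $f\ge c$, so the sign of $\lambda$ is immaterial; $\lambda\to 0$ because $\fint_{B_R}f\to 1$ and $\gamma^n\to 1$ while $\fint_{B_R}f-c\to 1-c>0$; and the annular piece of $f_{\gamma,R}$, the mass it carries, and hence the input to Proposition~\ref{prop:quasiminimizers}, are untouched. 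The cost is that you reopen Lemma~\ref{lemma:perturbation}, whereas the paper treats it as a black box.

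Finally, note that the paper finishes by transporting the result from $B_1(0)$ to a general uniformly convex $\Omega$ via a volume-preserving diffeomorphism $\varphi\colon\overline\Omega\to\overline{B_1(0)}$; your proposal omits this step.
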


\begin{proof}
We focus on the case $\Omega = B_1(0)$ first.
Fix $\delta>0$ sufficiently small; in particular, $\delta<1-c$ suffices.
We replace $c$ with $c/(1-\delta)<1$ in the definition \eqref{eq:defZeta} of $Z_p^\eta$, in order to account for the slightly worse lower bound satisfied by the perturbations of Lemma \ref{lemma:perturbation}.

Let us first fix $q$ such that $n\leq q$ and $p<q$.
For $k\in \N$ let
$Y_k \equiv \{ f \in Z^\delta_p: \mc E_q(f)\leq k \}.$
Note that each $Y_k$ is closed in $Z_p^\delta$. Indeed, given a sequence $f_j\in Y_k$ such that $\tp{dist}_p(f_j, f)\to 0$ for some $f\in Z_p^\delta$, if $v_j$ are energy-minimal solutions corresponding to $f_j$, then from weak compactness there is a function $v\in W^{1,q}(B_1,\R^n)$ such that
 $v_j\w v$ in $W^{1,q}(B_1,\R^n)$. By the weak continuity of the Jacobian and weak lower semicontinuity of the $q$-Dirichlet energy, it follows that $\mc E_q(v)\leq \liminf_j \mc E_q(v_j)\leq k$.
 
We show that $Y_k$ has empty interior in $Z_p^\delta$. Indeed, take $f_0\in Y_k$ and consider an arbitrary $\e>0$: we claim that $B_{Z_p^\delta}(f_0,\e)\not \subset Y_k$.
For this, it suffices to show that $B_{Z_p}(f_0,\e)\not\subset Y_k$, since $ B_{Z_p}(f_0,\e)\subset B_{Z^\delta_p}(f_0,\e)$. Any such ball $B_{Z_p}(f_0,\e)$ contains an element $f_1$ which is in $Z_p$; thus, by replacing $f_0$ with $f_1$ and shrinking $\e$ if need be, we can assume without loss of generality that $f_0\in Z_p$. 
By Corollary \ref{cor:blowup}, there are $f_j \in B_{Z^\delta_p}(f_0,\e)\subset Y_k$ such that $\tp{dist}_p(f_j, f)\to 0$ but $j\leq \mc E_q(f_j)$. This proves the claim.

Since each set $Y_k$ is closed and has empty interior, the Baire Category Theorem implies that $\bigcup_{k\in \N} Y_k$ is meagre in $Z_p^\delta$. Equivalently, 
$\J (\tp{id}+W^{1,q}_0(\Omega,\R^n)) \tp{ is meagre in } Z_p^\delta.$ For $p\geq n$, we have that
$$\J\biggr(\tp{id}+\bigcup_{n\leq q,\, p<q} W^{1,q}_0(\Omega,\R^n)\biggr)
=
\bigcup_{j=1}^\infty \J \left(\tp{id}+W^{1,q_j}_0(\Omega,\R^n)\right), \qquad q_j\searrow p,$$
and the right-hand side is a countable union of meagre sets, hence meagre, so the conclusion follows. The case $p<n$ is identical.

We conclude the proof by considering the case where $\Omega$ is a general smooth and uniformly convex domain. There is a smooth diffeomorphism $\varphi\colon \overline \Omega\to \overline{B_1(0)}$ with $\J \varphi =\omega_n/|\Omega|$, see for instance \cite[Theorem 5.4]{Fonseca1992} for a more general version of this fact; the existence of such a map can also be deduced from \textsc{Caffarelli}'s regularity theory for the Monge--Ampère equation, see e.g.\ \cite[Theorem 3.3]{DePhilippis2014}. 
We can now identify every map $u\colon \Omega\to \R^n$ with a map $v\colon B_1(0)\to \R^n$ with the same regularity, through $v\equiv \varphi \circ u \circ \varphi^{-1}$ and moreover, when $x\in \p B_1(0)$, we have $v(x) = x$, as $\varphi(\p \Omega)=\p B_1(0)$. Since $\J v = \J u \circ \varphi^{-1}$, the general statement follows from the case $\Omega=B_1(0)$.
\end{proof}

{\footnotesize
\bibliographystyle{acm}
\bibliography{/Users/antonialopes/Dropbox/Oxford/Bibtex/library.bib}
}

\end{document}